\newcommand{\NN}{\mathbb{N}}
\newcommand{\RR}{\mathbb{R}}
\newcommand{\ZZ}{\mathbb{Z}}
\newcommand{\AAA}{\mathcal{A}}
\newcommand{\BBB}{\mathcal{B}}
\newcommand{\PPP}{P}
\newcommand{\partitionof}{\vdash}
\newcommand{\numof}[1]{\# #1}
\newcommand{\Stab}{\operatorname{Stab}}
\newcommand{\codim}{\operatorname{codim}}
\newcommand{\ideal}{\mathcal{I}}
\newcommand{\nt}{\operatorname{\kappa}}
\theoremstyle{plain}   
\newtheorem{thm}{Theorem}[section]
\newtheorem{theorem}[thm]{Theorem}
\newtheorem{lemma}[thm]{Lemma}
\newtheorem{cor}[thm]{Corollary}
\newtheorem{proposition}[thm]{Proposition}
\theoremstyle{remark}  
\newtheorem{remark}[thm]{Remark}
\newtheorem{rem}[thm]{Remark}
\newtheorem{example}[thm]{Example}
\newtheorem*{acknowledgments}{Acknowledgments}
\theoremstyle{definition}  
\newtheorem{definition}[thm]{Definition}
\begin{document}

\title[hyperplane arrangements generated by generic points]%
{On intersection lattices of hyperplane arrangements generated by generic points}

\author[Koizumi]{Hiroshi KOIZUMI}
\address[H. Koizumi]{Mathematical Informatics, 
Graduate School of Information Science and Technology, 
University of Tokyo.}

\author[Numata]{Yasuhide NUMATA}
\address[Y. Numata]{Mathematical Informatics, 
Graduate School of Information Science and Technology, 
University of Tokyo.}

\author[Takemura]{Akimichi TAKEMURA}
\address[A. Takemura]{Mathematical Informatics, 
Graduate School of Information Science and Technology, 
University of Tokyo.}

\address[Y. Numata and A. Takemura]{JST CREST}

\thanks{The second and the third authors are supported by JST CREST}


\keywords{discriminantal arrangements; M\"obius function; 
characteristic polynomial; enumeration of elements}
\subjclass[2000]{52C35;05A99}

\begin{abstract}
We consider hyperplane arrangements generated by generic points
and study their intersection lattices.  
These  arrangements are known to be
equivalent to discriminantal arrangements. 
We show a fundamental structure of the intersection lattices by 
decomposing the poset ideals 
as  direct products of smaller lattices corresponding to smaller dimensions.
Based on this decomposition we compute the M\"obius functions of the lattices 
and  the characteristic polynomials of the arrangements up to dimension six.
\end{abstract}

\maketitle
\section{Introduction}
\label{sec:intro}
Consider a set of $n$ $( >  d)$ generic points  $\PPP=\Set{p_1,\ldots,p_n}$ 
in a $d$-dimensional vector space $V=K^d$ over a field $K$ of
characteristic zero.    
For $X\subset \PPP$ 
let $H_{X}$ denote 
the affine hull of $X$.
Let 
\[
\AAA=\Set{H_{X}| X\subset \PPP, \numof{X} = d}
\]
be the set of all hyperplanes defined by $H_{X}$ for some $X\subset\PPP$, 
$\numof{X}=d$.
Here we assume that points $p_1,\ldots,p_n$ are generic in the
sense of \cite{athanasiadis2000}.  
Then combinatorial properties of the arrangement  $\AAA$ 
does not depend on the points. 
Since in this paper
we are interested only  in the combinatorial properties of $\AAA$, 
we denote the arrangement by $\AAA_{n,d}$. 
We decompose the poset ideals  of the intersection lattice of $\AAA_{n,d}$
into direct products of smaller lattices corresponding to smaller dimensions.
Based on this decomposition 
we give an explicit description of the M\"obius functions 
and the characteristic polynomials of the intersection lattices 
for $d\le 6$ and for all $n>d$.

By Theorem 2.2 of \cite{falk1994}, 
$\AAA_{n,d}$ is 
equivalent to the discriminantal arrangement $\BBB(n,n-d-1)$ 
of \cite{manin-schechtman1989}.
Relevant facts on the discriminantal arrangement 
are given in Section 5.6 of \cite{orlik-terao-book},
\cite{bayer-brandt1997} and \cite{athanasiadis2000}.
We prefer to work with $\AAA_{n,d}$ 
because we utilize the recursive structure of $\AAA_{n,d}$ 
with respect to $d$.

The organization of this paper is the following.
In Section \ref{sec:def} we set up our definition and notation.  
In particular
following \cite{athanasiadis2000} 
we interpret the intersection lattice of our arrangement
in set theoretical terminology. 
We also give illustrations for $d\le 3$.
In Section \ref{sec:main}, 
we show the fundamental structure of the intersection lattice of $\AAA_{n,d}$,
which is the main result of this paper.
Based on the main result, 
in Section \ref{sec:computation}
we compute 
the M\"obius function of the intersection lattice,
the number of elements of a particular type of the intersection lattice,
and the characteristic polynomials of the arrangements up to $d=6$
and for all $n>d$.

\begin{acknowledgments}
The authors are very grateful to 
Hidehiko Kamiya and Hiroaki Terao for very useful comments.
\end{acknowledgments}

\section{Definition and Notation}
\label{sec:def}

We denote the intersection lattice of $\AAA_{n,d}$ by
\begin{equation*}
L(\AAA_{n,d})=\set{H_1 \cap \cdots \cap H_k | H_1,\ldots, H_k \in \AAA_{n,d}},
\end{equation*}
where the sets are ordered by reverse inclusion.  
Contrary to the usual convention, 
here we consider that 
$\emptyset = \bigcap_{X\colon \numof{X}=d} H_X$ belongs to $L(\AAA_{n,d})$,
so that $L(\AAA_{n,d})$ is not only a poset but also a lattice 
(cf.\ Proposition 2.3 of \cite{stanley-introduction}).
In usual convention, 
this corresponds to the coning $c\AAA_{n,d}$ of $\AAA_{n,d}$, 
except that we do not add a coordinate hyperplane.  
The reason for this unconventional definition
is that $\emptyset \in L(\AAA_{n,d})$ plays an essential role
for recursive description of $L(\AAA_{n,d})$.

We now follow \cite{athanasiadis2000} 
to give an interpretation of $L(\AAA_{n,d})$ in set theoretical terminology.

\begin{definition}
For a finite set  $X$, 
we define 
\[
\codim_d(X) = d+1-\numof{X}.
\]
For distinct finite sets $T_1,\ldots, T_l$,
we define 
\begin{align}
\nonumber
\rho_d(\Set{T_1,\ldots,T_l})
&= \codim_d T_1 +\cdots+\codim_d T_l,\\
D_d(\Set{T_1,\ldots,T_l})
&=\codim_d(T_1\cap\cdots\cap T_l)-\rho_d(\Set{T_1,\ldots,T_l}).\nonumber
\end{align}
We also define $\rho_d(\emptyset)=\rho_d(\Set{ }) = 0$.
\end{definition}

\begin{remark}
\label{remark:D for 2}
By definition, it follows that
\begin{align}
\label{eq:Dd-alternative}
D_d&(\Set{T_1,\ldots,T_l})
=-(l-1)(d+1)+\numof{T_1}+\cdots+\numof{T_l}-\numof{(T_1\cap\cdots \cap T_l)}. 
\end{align}
In particular for $T_1 \neq T_2$, 
\begin{equation}
\label{eq:D-for-2}
D_d(\Set{T_1,T_2})=\numof{(T_1\cup T_2)}-(d+1).
\end{equation}
\end{remark}

\begin{remark}
\label{remark:nonempty-intersecton}
For $Y\subset X$, $\codim_{d-\numof{Y}}(X\setminus Y)=\codim_d(X)$.
This implies the following fact. 
Let  $U \subset T_1 \cap \dots \cap T_l$. Then
\begin{align*}
\rho_{d-\numof{U}}(\set{ T_1\setminus U, \dots, T_l \setminus U})&=
\rho_{d}(\set{ T_1, \dots, T_l}), \\
D_{d-\numof{U}}(\set{ T_1\setminus U, \dots, T_l \setminus U})&=
D_{d}(\set{ T_1, \dots, T_l}).
\end{align*}
\end{remark}

\begin{definition}
\label{def:DefOfLattice}
For $d>0$ and $n>d$, we define $L(n,d)$ to be the set of
$T\subset 2^{\Set{1,\ldots,n }}$ satisfying the following two conditions:
\begin{enumerate}[label=\arabic{*}) ]
\item \label{item:DefOfLattice1}
$D_d(T')>0$ for all $T'\subset T$ with $\numof{T'}>1$.
\item \label{item:DefOfLattice2}
$0\leq \numof{T_i} \leq d$ for all $T_i \in T$.
\end{enumerate}
Moreover we define the partial ordering $ < $ on $L(n,d)$ by
\begin{align}
T<T' 
&\iff 
\begin{cases}
\rho_d(T) < \rho_d(T') & \text{and} \\
\text{$\forall T_i \in T$, $\exists T_j' \in T' $ such that $T_j' \subset T_i$.}
\end{cases}
\label{eq:partial-order}
\end{align} 
\end{definition}

Let $\PPP=\Set{p_1,\ldots,p_{n}}$ 
be a collection of generic points in $V$
in the sense of Section \ref{sec:intro}.
For $X \subset \Set{1,\ldots,n}$,
$0\leq \numof{X} \leq d$, 
define $H_X$ to be the affine hull $H_{\Set{p_i|i\in X}}$.
Since $n>d$, there exists a subset $X'\subset \Set{1,\ldots,n}$ such that 
$X' \cap X =\emptyset$ and $\numof{(X\cup X')}=d+1$.
Hence
\begin{align*}
H_X&=H_{\Set{p_{i_1},\ldots,p_{i_l}}}\\
&=
\bigcap_{k\in X'}H_{\Set{p_i | i \in X \cup X'} \setminus \Set{ p_{k} } } 
\in L(\AAA_{n,d}).
\end{align*}
This mapping induces
a map from $L(n,d)$ to $L(\AAA_{n,d})$,
or equivalently, $T\in L(n,d)$ corresponds to 
$H(T)=\bigcap_{T_i\in T} H_{T_i} \in L(\AAA_{n,d})$.
By this correspondence,  $L(n,d)$ is isomorphic to $L(\AAA_{n,d})$ 
as lattices (\cite{athanasiadis2000}, \cite{falk1994}).

\begin{remark}
\label{rem:max-element}
$L(n,d)$ is a graded poset with the rank function $\rho_d$.
$\emptyset=\set{}$ is the minimum element of $L(n,d)$ with $\rho_d(\emptyset)=0$
and $\set{\emptyset}$ ($\emptyset\subset \set{1,\dots,n}$) is the maximum element of $L(n,d)$ 
with $\rho_d(\set{\emptyset})=d+1$.
In the one-to-one correspondence between $L(n,d)$ and $L(\AAA_{n,d})$,
$H(\emptyset)=V=K^d$ and $H(\set{\emptyset})=\emptyset$ $(\subset K^d)$.  
In the case $d=0$,
the condition \ref{item:DefOfLattice2}
in Definition \ref{def:DefOfLattice}
implies $\numof{T_i} = 0$ for $T_i \in T \in L(n,0)$.
Hence $L(n,0)$ is the poset of two elements
\begin{equation*}
 L(n,0)=\set{\emptyset, \set{\emptyset}}
 \end{equation*}
independent of $n$.
\end{remark}

Let $d$ be a nonnegative integer.
We call 
a weakly-decreasing sequence $\delta=(\delta_1,\delta_2,\ldots)$
of nonnegative integers such that $\sum_{i} \delta_i=d$
a {\em partition} of $d$.
We write $\delta\partitionof d$ to say that
$\delta$ is a partition of $d$.
We also regard a partition 
as a multiset of positive integers. 
For example, $\Set{ \delta \partitionof 3 }=\set{(3),(2,1),(1,1,1)}$,
and $\Set{ \delta \partitionof 0 }$ 
is the set consisting of  the unique partition of zero, 
which is denoted by $(0)$.

\begin{definition}
Let  $T=\set{T_1, \dots, T_l}\in L(n,d)$.
Without loss of generality
assume  $\numof{T_1}\leq \dots \leq \numof{T_l}$.  
We call 
\[
\gamma_d(T)=(\codim_d(T_1),\dots,\codim_d(T_l)) \partitionof \rho_d(T)
\]
the {\em type} of $T$.  
\end{definition}
\begin{example}
For any $d$, $\gamma_d(\emptyset)=(0)$ and $\gamma_d(\set{\emptyset})=(d+1)$.
\end{example}

\begin{definition}
For $T\in L(n,d)$,
we define $\ideal_{n,d}(T)$ to be the poset ideal generated by $T$,
i.e., $\ideal_{n,d}(T)=\Set{S \in L(n,d)|S\leq T}$.
\end{definition}

Finally we define 
the M\"obius function $\mu_{n,d}$ of the poset $L(n,d)$, 
which will be studied in Section \ref{sec:computation}.  
Define $\mu_{n,d}$ by
\[
\mu_{n,d}(T,T)=1, \quad 
\sum_{S\colon T\leq S\leq T'} \mu_{n,d}(T,S)=0, \ T < T' .
\]
We write  $\mu_{n,d}(T) = \mu_{n,d}(\emptyset,T)$.
The characteristic polynomial $\chi_{n,d}(t)$ of the poset $L(n,d)$ 
(cf.\ Section 3.10 of \cite{stanley-book-1}) is defined by
\begin{equation}
\chi_{n,d}(t) =\sum_{T\in L(n,d)} \mu_{n,d}(T) t^{d+1-\rho_d(T)}.
\label{eq:characteristic-function}
\end{equation}

Note that 
the usual characteristic polynomial $\chi(\AAA_{n,d},t)$
of the non-central arrangement $\AAA_{n,d}$ 
is given as
\[
\chi(\AAA_{n,d},t)
=\sum_{T\in L(n,d), \, T\neq \set{\emptyset}} \mu_{n,d}(T) t^{d-\rho_d(T)}
=\frac{\chi_{n,d}(t)- \mu_{n,d}(\set{\emptyset})}{t}.
\]
Conversely from $\chi(\AAA_{n,d},t)$ 
we can evaluate  $\mu_{n,d}(\set{\emptyset})=-\chi(\AAA_{n,d},1)$
since  $\chi_{n,d}(1)=0$. 
Equivalently
\begin{equation}
\label{eq:mumaxphi}
  \mu_{n,d}(\set{\emptyset})
=- \sum_{T\in L(n,d), \, T\neq \set{\emptyset}} \mu_{n,d}(T).
\end{equation}

\subsection{Illustration of the posets up to  dimension three}
\label{subsec:illustration}

We illustrate the above definitions with $d=0,\dots,3$.
For $d=0$ we already saw $L(n,0)=\set{\emptyset,\set{\emptyset}}$.
In particular $\mu_{n,0}(\set{\emptyset})=-1$.

Let $d=1$. 
In $L(n,1)$, 
in addition to the minimum $\emptyset$ and the maximum $\set{\emptyset}$, 
there are $n$ rank one elements $\Set{\set{i}}$, $i=1,\dots,n$,  
with $\mu_{n,1}(\Set{\set{i}})=-1$. 
Hence $\chi(\AAA_{n,1},t)=t-n$.
The value $\mu_{n,1}(\set{\emptyset})=n-1$ is relevant for 
$d > 1$.  

\begin{figure}
\begin{center}
\includegraphics[width=7cm]{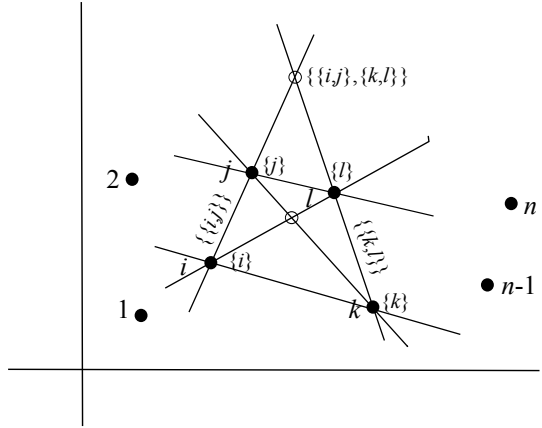}
\end{center}
\caption{Arrangement for dimension two}
\label{fig:d2}
\end{figure}

The case  $d=2$ is already discussed 
in Section 7 of \cite{manin-schechtman1989} 
and Section 5.6 of \cite{orlik-terao-book}.
However we present it here from our viewpoint.
As shown in Figure \ref{fig:d2},
each line (rank one element) is labeled by a pair of points, such as $T=\set{\set{i,j}}$,
which is a line connecting points $p_i$ and $p_j$.  
There are two types  of points (rank two elements).  
The first type is an element of type $(2)\partitionof 2$.
Each element  $\set{\set{i}}$ of type $(2)\partitionof 2$  
corresponds to an original point in $\PPP$.  
The second type is an element of type $(1,1)\partitionof 2$.
Each element $T=\set{\set{i,j},\set{k,l}}$  of type $(1,1)$ corresponds
the intersection of two lines, 
depicted by a white circle in Figure \ref{fig:d2}.  
The M\"obius function is evaluated as 
$\mu_{n,2}(\set{\set{i}})=n-2$ and  
$\mu_{n,2}(\set{\set{i,j},\set{k,l}})=1$.

\begin{rem}
\label{rem:n-existence}
In this paper 
we are assuming that $n > d$ 
so that $\AAA_{n,d}$ is a non-central arrangement.
We usually think of $n$ as ``sufficiently large'' compared to $d$.  
Relevant quantities are polynomials in $n$ and 
these polynomials are determined by sufficiently large $n$.  
However our polynomials hold for all $n>d$ with appropriate qualifications.  
For example, 
the second type $\set{\set{i,j},\set{k,l}}$ of $L(n,2)$ exists 
if and only if $n\ge 4$.  
As long as $n\ge 4$, $\mu_{n,2}(\set{\set{i,j},\set{k,l}})=1$.
In general, 
when we write  $T\in L(n,d)$, 
this $T$ has to exist in $L(n,d)$.
Actually we are interested in the existence of some $T'$
with the same type as $T$, i.e. $\gamma_d(T')=\gamma_d(T)$.
The existence implies 
that $n$ has to be larger than or equal to some specific value,  
say $n_{\gamma_d(T)}$,  depending on the type of $T$.
As shown in Section \ref{subsec:number-of-elements},
$n_{\gamma_d(T)}$ is the minimum $n$ such that 
the number of elements of $L(n,d)$ of the type $\gamma_d(T)$ is positive.
\end{rem}

We now count the number of elements of $L(n,2)$.  
This is also needed to evaluate $\mu_{n,2}(\set{\emptyset})$. 
There are $\binom{n}{2}$ lines.
There are $n$ points of the first type and 
\[
\frac{1}{2} \binom{n}{2}\binom{n-2}{2}=3 \binom{n}{4}
\]
points of the second type. 
As discussed in Remark \ref{rem:n-existence}, 
this $3 \binom{n}{4}$ is positive
if and only if $n \ge 4$.

Therefore for $n\ge 3$ 
\begin{align}
\chi(\AAA_{n,2},t)
&=t^2 - \binom{n}{2} t + 3\binom{n}{4} + n(n-2)  \nonumber \\
&=t^2 - \binom{n}{2} t + 3\binom{n}{4} + 2 \binom{n}{2} - n, \nonumber \\
\mu_{n,2}(\set{\emptyset})
&= - 3\binom{n}{4}  -\binom{n}{2} + n-1.
\label{eq:d2-characteristic} 
\end{align}
These quantities are polynomials in $n$ 
and we prefer to write these polynomials
as integer combinations of binomial coefficients $\binom{n}{k}$.
Note that, 
in view of Remark \ref{rem:n-existence}, 
$\binom{n}{k}=0$ for integer $k>n$.

We now discuss the case of $d=3$.

We first look at rank two elements (lines) of $\AAA_{n,3}$.  
There are two types of elements. 
The first type  is an element of type $(2)\partitionof 2$. 
Each element of type $(2)$, such as $T=\set{\set{1,2}}$, 
corresponds to
the line connecting two points  as 
in the leftmost picture of Figure \ref{fig:d3r2}. 
$\set{\set{1,2}}$ is understood 
as the intersection of all hyperplanes $\set{\set{1,2,i}}$, $i=3,\dots,n$.
The second type is an element of type $(1,1)\partitionof 2$.
Each elements of type $(1,1)$ corresponds to
an intersection of two hyperplanes, 
such as $H(\set{\set{1,2,3}}) \cap H(\set{\set{4,5,6}})$.
As shown in the rightmost picture in Figure \ref{fig:d3r2},
two points ($p_3$ and $p_4$ in the picture) may overlap 
in this case without violating \ref{item:DefOfLattice1}
of Definition \ref{def:DefOfLattice}.
This type of element exists for $n \ge 5$ (cf.\ Remark \ref{rem:n-existence}).

\begin{figure}
\begin{center}
\includegraphics[width=4.5cm]{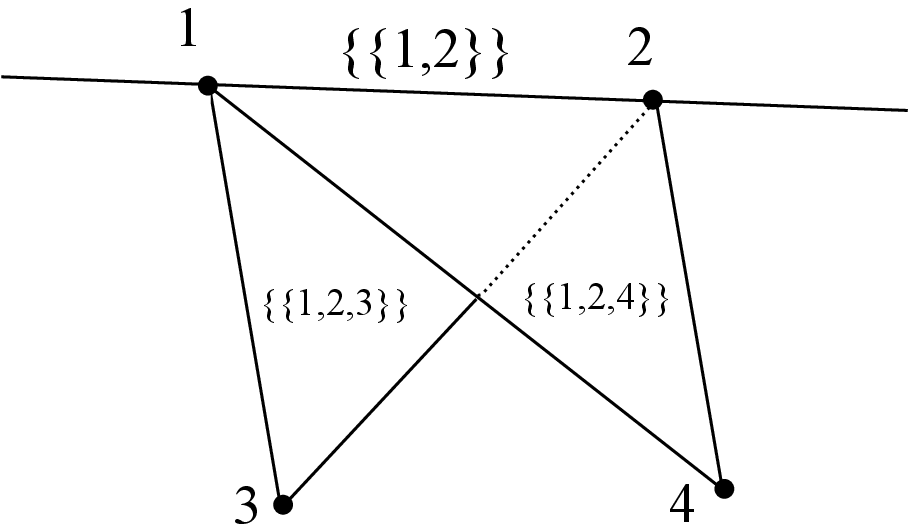}\qquad
\includegraphics[width=7cm]{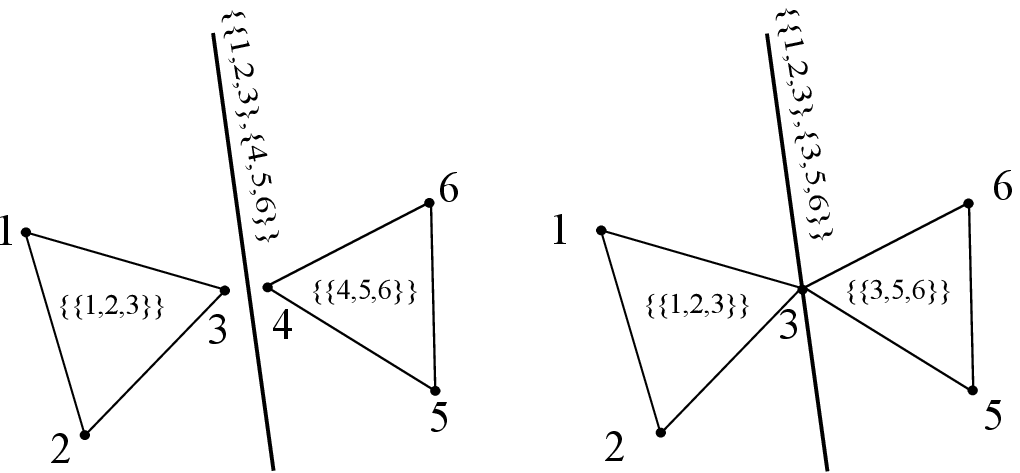}
\end{center}
\caption{Rank two elements for dimension three}
\label{fig:d3r2}
\end{figure}

Finally we look at rank three elements (points) of $\AAA_{n,3}$.
We will not repeat remarks on existence  of these elements of $L(n,3)$.
There are three types of rank three elements, 
corresponding to three partitions of $3$.
Each element $\set{\set{i}}$ of the first type $(3)\partitionof 3$, 
corresponds to
an original point in  $\PPP$.
Each element the second type $(2,1)\partitionof 3$  
corresponds to 
an intersection of a line of type $(2)\partitionof 2$ and a hyperplane, 
e.g.\ $H(\set{\set{1,2}}) \cap H(\set{\set{3,4,5}})$ 
as shown in Figure \ref{fig:d3r3-21}.
\begin{figure}
\begin{center}
\includegraphics[width=5cm]{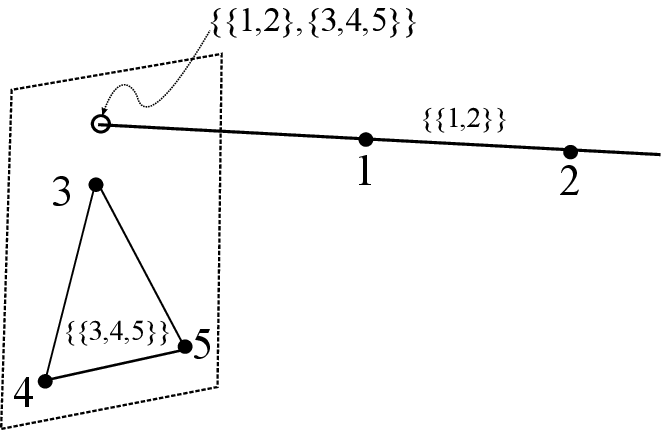}
\caption{Rank three  element for dimension three of type
 $(2,1)\partitionof 3$}
\label{fig:d3r3-21}
\end{center}
\end{figure}
\begin{figure}
\begin{center}
\includegraphics[width=13cm]{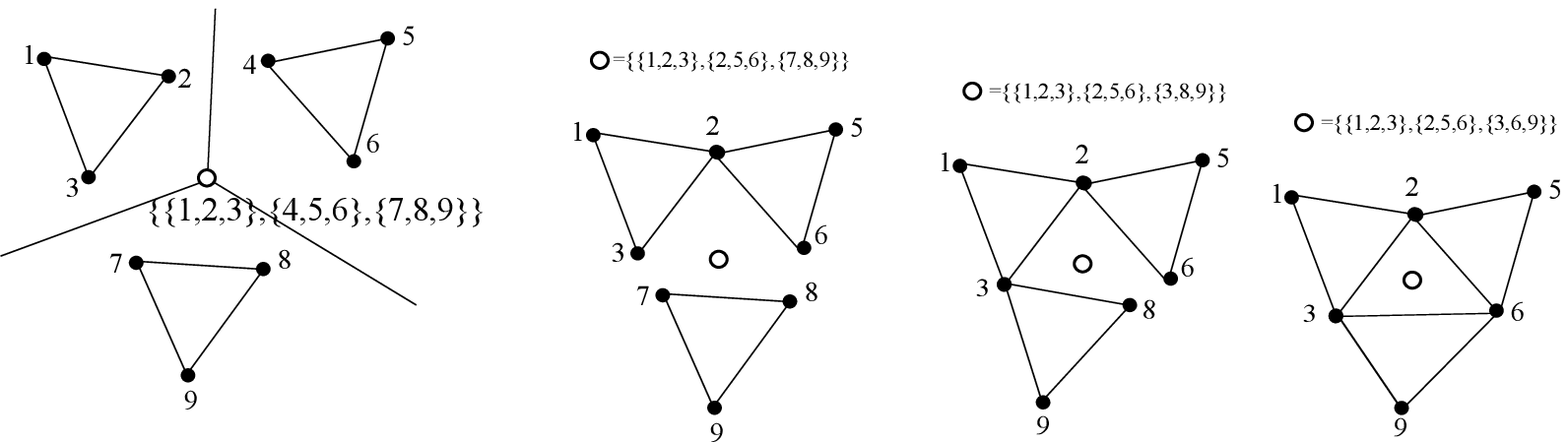}
\end{center}
\caption{Rank three elements for dimension three of type $(1,1,1)\partitionof 3$}
\label{fig:d3r3-111}
\end{figure}
The third type is $(1,1,1)\partitionof 3$,
corresponding to
an intersection of three hyperplanes
as depicted by a white circle in Figure \ref{fig:d3r3-111}.  
Without violating \ref{item:DefOfLattice1} 
of Definition \ref{def:DefOfLattice},
there are four patterns of overlaps of points.

As will be proved in Section \ref{sec:computation},
the M\"obius function depends only on the above types 
(i.e.\ the overlaps  of points do not affect the M\"obius function) 
and it is given as follows.
\begin{align}
\label{eq:mobius-d3}
\mu_{n,3}(\set{\set{1,2}})&=
-\mu_{n,3}(\set{\set{1,2},\set{3,4,5}})=\mu_{n-2,1}(\set\emptyset)
\\
&=n-3,\nonumber \\
\mu_{n,3}(\set{\set{1}})&=\mu_{n-1,2}(\set{\emptyset})\nonumber \\
&=   - 3 \binom{n-1}{4} - \binom{n-1}{2}+ n - 2,\nonumber
\end{align}
and $\mu_{n,3}(T)=(-1)^{\rho_3(T)}$ 
for all other $T$, $T\neq \set{\emptyset}$.

We need the numbers of elements of $L(n,3)$ 
to evaluate $\mu_{n,3}(\set{\emptyset})$.
These are tabulated in Table \ref{tab:d30}.
\begin{table}
\caption{Number of elements for $d=3$}
\label{tab:d30}
{\small
\begin{tabular}{|c|c|c|c|c|c|} \hline
(1) & (2) & (1,1) & (3) & (2,1) &  (1,1,1) \\ \hline
$\binom{n}{3}$ & $\binom{n}{2}$ & $10\binom{n}{6}+15\binom{n}{5}$ & $n$
& $10 \binom{n}{5}$ & $280 \binom{n}{9} + 840\binom{n}{8} + 630\binom{n}{7} + 120\binom{n}{6}$
\\ \hline
\end{tabular}
}
\end{table}
An element of a particular type exists 
if and only if the number of elements is positive in Table \ref{tab:d30}.  
For example, 
$T$ of type $(1,1)\partitionof 2$ exists 
if and only if $10\binom{n}{6}  + 15\binom{n}{5} > 0$, 
i.e.\ $n\ge 5$. 
From Table \ref{tab:d30} and \eqref{eq:mobius-d3}
we obtain (for $n\ge 4$) 
\begin{align*}
\chi(\AAA_{n,3},t)&= t^3 - \binom{n}{3} t^2 + 
\big[ -\binom{n}{2} + 3 \binom{n}{3} + 15 \binom{n}{5} + 10\binom{n}{6}\big] t\\
& \ -\big[ n - 2 \binom{n}{2} + 3 \binom{n}{3} + 35 \binom{n}{5} + 180\binom{n}{6}
+ 630 \binom{n}{7} \\
& \qquad\qquad\qquad + 840\binom{n}{8} + 280\binom{n}{9}\big], \\
\mu_{n,3}(\set\emptyset)&= -1 +n - \binom{n}{2}  + \binom{n}{3} + 20\binom{n}{5}
 + 170 \binom{n}{6} + 630\binom{n}{7}\\
& \qquad\qquad\qquad  + 840\binom{n}{8} + 280\binom{n}{9}.
\end{align*}

\section{Main result}
\label{sec:main}

In this section we show the following main theorem.
\begin{theorem}
\label{thm:structure of ideal is directprod}
Let $T\in L(n,d)$, $T\neq \emptyset$. 
Then the ideal $\ideal_{n,d}(T)$
is isomorphic to the direct product 
$\prod_{T_i \in T} \ideal_{n,d}(\{T_i\})$
as posets.  They are also isomorphic to 
$\prod_{T_i \in T} L(n-\numof{T_i},d-\numof{T_i})$.
\end{theorem}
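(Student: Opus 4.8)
The plan is to establish the two isomorphisms in sequence: first $\ideal_{n,d}(T)\cong\prod_{T_i\in T}\ideal_{n,d}(\{T_i\})$, then $\ideal_{n,d}(\{T_i\})\cong L(n-\numof{T_i},d-\numof{T_i})$. For the first isomorphism, write $T=\{T_1,\dots,T_l\}$ and consider the candidate map that sends $S\in\ideal_{n,d}(T)$ to the tuple $(S^{(1)},\dots,S^{(l)})$, where $S^{(i)}$ collects those blocks of $S$ that refine (are contained in) $T_i$. The order condition \eqref{eq:partial-order} defining $S<T$ says precisely that $\rho_d(S)<\rho_d(T)$ and every block of $S$ lies inside some block of $T$, so each block of $S$ gets assigned to at least one $T_i$; the first step is to check this assignment is well-defined, i.e.\ that a block of $S$ contained in $T_i$ and in $T_j$ with $i\neq j$ cannot occur for $S$ in the ideal. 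This is where condition~\ref{item:DefOfLattice1} enters: if some block $S_a\subset T_i\cap T_j$, then $\{T_i,T_j\}$ would force $D_d(\{T_i,T_j\})>0$ via $T$ itself already, but more to the point one shows the block structure forces the $T_i$ to be ``independent'' enough that no block of any $S\le T$ can sit in two of them — I would verify this using \eqref{eq:D-for-2} and the genericity encoded in the definition. Granting well-definedness, one checks that each $S^{(i)}$ lies in $\ideal_{n,d}(\{T_i\})$ (it refines $T_i$ and has smaller or equal $\rho_d$), that the map is a bijection onto the product with inverse given by taking unions of blocks, and that it is order-preserving in both directions, the latter because $\rho_d$ is additive over the disjoint groups: $\rho_d(S)=\sum_i\rho_d(S^{(i)})$, so $\rho_d(S)<\rho_d(S')$ together with blockwise refinement is equivalent to the same holding coordinatewise — here one must be slightly careful that strict inequality globally corresponds to the product order on the factors, using that $\rho_d(S^{(i)})\le\rho_d(\{T_i\})$ always and $\rho_d$ is the rank function (Remark~\ref{rem:max-element}).

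For the second isomorphism $\ideal_{n,d}(\{T_i\})\cong L(n-\numof{T_i},d-\numof{T_i})$, the idea is to ``delete the points of $T_i$.'' Set $U=T_i$ (so $\numof{U}\le d$ and $U\subset\{1,\dots,n\}$) and send a block $S_a$ of $S\le\{U\}$ — which satisfies $S_a\subset U$ — to $S_a$ itself, viewed as a subset of a ground set of size $n-\numof{U}$ after relabeling, except we instead delete: actually the correct map sends $S=\{S_1,\dots,S_m\}$ with each $S_b\subset U$ to ... no: the elements of $\ideal_{n,d}(\{U\})$ are those $S$ all of whose blocks are contained in $U$, and the natural bijection with $L(n-\numof{U},d-\numof{U})$ is obtained by complementation within a fixed $(d+1)$-point superset, but cleaner is to invoke Remark~\ref{remark:nonempty-intersecton}: for any $S\le\{U\}$ every block $S_b\subseteq U$, so $U\subseteq$ (nothing) — instead I would use that the generic hyperplane picture localizes, i.e.\ the subarrangement below $H_U$ is itself an arrangement of type $\AAA_{n',d'}$ with $d'=d-\numof{U}$ hyperplanes generated by the $n-\numof{U}$ points not among the $\numof{U}$ defining points of $T_i$. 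Concretely: a block $S_b\subset U=T_i$ corresponds, after removing the $|U|$ forced points, to a subset of the remaining $n-|U|$ points, and Remark~\ref{remark:nonempty-intersecton} shows $D$ and $\rho$ are unchanged under this removal with the dimension shifted by $\numof{U}$. So the map $S\mapsto\{U\setminus S_1,\dots\}$... I need the one that realizes $H_{S_b}$ inside $H_U\cong K^{\,\numof{U}-1}$; following Athanasiadis's set-theoretic dictionary, a line/flat below $H_U$ spanned by points of $U$ is recorded by which points of $U$ it uses, which after a change of ambient from the big generic configuration to the sub-configuration sitting on $H_U$ gives exactly an element of $L(n-\numof{T_i},d-\numof{T_i})$ — the point-count drops because the $\numof{T_i}$ points defining $T_i$ become ``used up'' as the new ambient frame.

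Putting it together, the second isomorphism reduces to checking that the conditions \ref{item:DefOfLattice1}, \ref{item:DefOfLattice2} and the order \eqref{eq:partial-order} are preserved under the dimension-and-ground-set shift, which is exactly the content of Remark~\ref{remark:nonempty-intersecton} applied with $U=T_i$ (so that $\codim_{d-\numof{T_i}}$ of the shifted block equals $\codim_d$ of the original, and $D,\rho$ match). I expect the main obstacle to be the well-definedness step in the first isomorphism: showing that no block of an $S$ in the ideal can be contained in two distinct blocks $T_i,T_j$ of $T$, equivalently that $\bigcup\{T_i\}$ decomposes the refinements without overlap. The heart of this is a combinatorial consequence of condition~\ref{item:DefOfLattice1} for $T$ (and its subsets), roughly: if $S_a\subseteq T_i\cap T_j$ then one can estimate, via \eqref{eq:Dd-alternative}, that replacing $\{T_i,T_j\}$ by $\{T_i,T_j,S_a\}$ keeps all the $D$-values positive only if $T_i,T_j$ were far apart, but the refinement $S\le T$ forces them close — making this precise is the one genuinely delicate calculation, and everything else (bijectivity, order-preservation, the product rank formula) is bookkeeping.
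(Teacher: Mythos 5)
Your high-level architecture matches the paper's (group the blocks of an element $S$ of the ideal according to the blocks of $T$, and reduce each single-block ideal by a dimension shift), but the proposal as written rests on a misreading of the partial order, and it omits the one step that actually requires work. The order \eqref{eq:partial-order} says that $S\le T$ means every block $S_a$ of $S$ \emph{contains} some block of $T$ (together with $\rho_d(S)\le\rho_d(T)$), not that $S_a$ lies inside some $T_i$. Consequently the grouping must be by which $T_i$ is contained in $S_a$; the ideal $\ideal_{n,d}(\set{T_1})$ consists of those $S$ all of whose blocks contain $T_1$; and the isomorphism with $L(n-\numof{T_1},d-\numof{T_1})$ is simply $S_a\mapsto S_a\setminus T_1$, justified by Remark \ref{remark:nonempty-intersecton} (this is the paper's Lemma \ref{lem:single-set-ideal}), not any of the complementation or ``points of $U$ used'' maps you sketch. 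The directional error also derails your treatment of what you correctly identify as the well-definedness issue: the relevant configuration is not $S_a\subseteq T_i\cap T_j$ but $T_i\cup T_j\subseteq S_a$, and then the argument is a one-liner (the paper's Proposition \ref{lemma:uniqueex}): $D_d(\set{T_i,T_j})>0$ and \eqref{eq:D-for-2} force $\numof{S_a}\ge\numof{(T_i\cup T_j)}>d+1$, contradicting condition \ref{item:DefOfLattice2}. So the step you flag as ``the one genuinely delicate calculation'' is in fact easy once the containments are right, and your proposed estimate would not produce it.

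The genuine gap is in the direction you dismiss as bookkeeping. To get a bijection with the product you must show that the gluing map is well defined into $L(n,d)$: if $S^{(i)}\in\ideal_{n,d}(\set{T_i})$ for each $i$, then the union $S=S^{(1)}\cup\cdots\cup S^{(l)}$ must satisfy condition \ref{item:DefOfLattice1} of Definition \ref{def:DefOfLattice}, i.e. $D_d(S')>0$ for every subset $S'$ of $S$, including subsets mixing blocks that contain different $T_i$'s. Nothing in the definition of the ideal gives this for free; the paper proves it via a monotonicity lemma for $D_d$ under enlarging blocks (Lemma \ref{lemma:3.2}) combined with a composition lemma (Lemma \ref{lemma:composition}) showing $D_d(S)\ge\sum_i D_d(S^{(i)})+D_d(T)>0$, and this is the substantive content behind Lemma \ref{thm:structure of ideal} and hence the theorem. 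Your proposal contains no argument for this point, so even after correcting the containment directions it does not yet establish surjectivity of the decomposition map. The rank additivity and order-preservation you mention are indeed routine once the grouping is known to be a disjoint partition (which again uses Proposition \ref{lemma:uniqueex}).
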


The second part of this theorem is a  consequence of the following lemma.
\begin{lemma}
\label{lem:single-set-ideal}
For $\set{T_1}\in L(n,d)$, 
$\ideal_{n,d}(\{T_1\})$ is isomorphic to $L(n-\numof{T_1},d-\numof{T_1})$
as posets.
\end{lemma}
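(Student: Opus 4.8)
The plan is to construct an explicit order isomorphism by deleting the entries of $T_1$ from every member of each family. Write $m=\numof{T_1}$ and identify $\set{1,\dots,n}\setminus T_1$ with $\set{1,\dots,n-m}$. I would define
\[
\phi\colon \ideal_{n,d}(\set{T_1})\to L(n-m,d-m),\qquad
\phi(\set{S_1,\dots,S_l})=\set{S_1\setminus T_1,\dots,S_l\setminus T_1},
\]
together with a candidate inverse $\psi(\set{R_1,\dots,R_l})=\set{R_1\cup T_1,\dots,R_l\cup T_1}$ in the opposite direction. The first step is the easy observation that, by \eqref{eq:partial-order} with $T_1$ itself playing the role of $T_j'$ (and the trivial case $S=\set{T_1}$), for every $S\in\ideal_{n,d}(\set{T_1})$ each member $S_i\in S$ contains $T_1$; this makes $\phi$ well defined as a map into subsets of $2^{\set{1,\dots,n-m}}$, and the sets $S_i\setminus T_1$ remain pairwise distinct precisely because all the $S_i$ contain $T_1$. (In fact one can check that $\ideal_{n,d}(\set{T_1})$ is exactly the set of $S\in L(n,d)$ all of whose members contain $T_1$, but the inclusion just noted is all that is needed to get $\phi$ started.)

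Next I would verify that $\phi$ lands in $L(n-m,d-m)$ and that $\psi$ lands in $\ideal_{n,d}(\set{T_1})$. Both checks are applications of Remark \ref{remark:nonempty-intersecton} with $U=T_1$. For $\phi(S)$: since $T_1\subseteq\bigcap_i S_i$, that remark gives $D_{d-m}(\phi(S'))=D_d(S')$ for every subfamily $S'\subseteq S$, so condition \ref{item:DefOfLattice1} transfers; and condition \ref{item:DefOfLattice2} follows from $\numof{(S_i\setminus T_1)}=\numof{S_i}-m$ together with $m\le\numof{S_i}\le d$. For $\psi(R)$: applying the remark to the family $\set{R_i\cup T_1}$ with $U=T_1$ (so that $(R_i\cup T_1)\setminus T_1=R_i$ because $R_i\cap T_1=\emptyset$) turns conditions \ref{item:DefOfLattice1} and \ref{item:DefOfLattice2} for $R$ back into the same conditions for $\psi(R)$, hence $\psi(R)\in L(n,d)$; and since every member of $\psi(R)$ contains $T_1$, to see $\psi(R)\le\set{T_1}$ it only remains to note $\rho_d(\psi(R))=\rho_{d-m}(R)\le d-m+1=\codim_d(T_1)=\rho_d(\set{T_1})$ by Remark \ref{remark:nonempty-intersecton} and Remark \ref{rem:max-element}, with equality exactly when $R=\set{\emptyset}$, i.e.\ $\psi(R)=\set{T_1}$.

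It is then immediate that $\phi\circ\psi$ and $\psi\circ\phi$ are the identity, since $R_i\cap T_1=\emptyset$ and $T_1\subseteq S_i$; in particular $\phi$ is a bijection. Finally I would check order-preservation in both directions: again by Remark \ref{remark:nonempty-intersecton}, $\rho_d(S)=\rho_{d-m}(\phi(S))$, and for two sets both containing $T_1$ one has $S_j'\subseteq S_i\iff S_j'\setminus T_1\subseteq S_i\setminus T_1$, so reading off \eqref{eq:partial-order} gives $S<S'\iff\phi(S)<\phi(S')$ on $\ideal_{n,d}(\set{T_1})$. Hence $\phi$ is a poset isomorphism onto $L(n-m,d-m)$, which is the claim.

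I expect no conceptual difficulty here; the work is all bookkeeping. The two points that need care are keeping the two uses of Remark \ref{remark:nonempty-intersecton} straight — deleting $T_1$ when pushing through $\phi$, adjoining it when pushing through $\psi$ — and handling the extremal families correctly, namely that $\set{T_1}\in\ideal_{n,d}(\set{T_1})$ corresponds to the maximum $\set{\emptyset}\in L(n-m,d-m)$ and that the empty family corresponds to the empty family; the degenerate case $T_1=\emptyset$, where $m=0$ and $\phi$ is the identity on $L(n,d)$, is consistent with this.
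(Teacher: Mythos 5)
Your proposal is correct and follows essentially the same route as the paper: the deletion map $S\mapsto\set{S_1\setminus T_1,\dots,S_l\setminus T_1}$, justified via \eqref{eq:partial-order} and Remark \ref{remark:nonempty-intersecton}. The only difference is that you spell out the details the paper leaves as ``seen to be one-to-one and onto, and preserves the partial order,'' in particular by exhibiting the inverse $R\mapsto\set{R_i\cup T_1}$ explicitly.
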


\begin{proof}
Suppose that $S=\set{S_1,\dots, S_l} \in 
\ideal_{n,d}(\set{T_1})$. Then 
$\set{S_1,\dots, S_l} \le \set{T_1}$,  so 
$S_i \supset T_1$, $\forall i$,  by
\eqref{eq:partial-order}.  Hence 
$\set{S_1\setminus T_1,\dots, S_l\setminus T_1 }\in L(n-\numof{T_1},d-\numof{T_1})$
by Remark \ref{remark:nonempty-intersecton}.
Therefore we have a map
\begin{align*}
\ideal_{n,d}(\set{T_1})\ni \set{S_1, \dots, S_l} \mapsto
\set{S_1\setminus T_1,\dots, S_l\setminus T_1 }\in L(n-\numof{T_1},d-\numof{T_1}),
\end{align*}
which is seen to be one-to-one and onto, and preserves the partial order.
\end{proof}

To prove the first part of Theorem \ref{thm:structure of ideal is directprod},
we show one proposition and three lemmas.

\begin{proposition}
\label{lemma:uniqueex}
Let $T< T' \in  L(n,d)$.
For each $T_i \in T$, 
there uniquely exists $T'_j \in T'$ such that $T'_j \subset T_i$.
\end{proposition}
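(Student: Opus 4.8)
The plan is to prove that for each $T_i \in T$ the set $\{T'_j \in T' : T'_j \subset T_i\}$ has exactly one element. By the definition of the partial order \eqref{eq:partial-order}, this set is nonempty, so the only thing to establish is uniqueness. Suppose, for contradiction, that there exist two distinct sets $T'_a, T'_b \in T'$ with $T'_a \subset T_i$ and $T'_b \subset T_i$. I would then try to derive a violation of condition \ref{item:DefOfLattice1} of Definition \ref{def:DefOfLattice} for the pair $\{T'_a, T'_b\} \subset T'$, i.e. show $D_d(\{T'_a, T'_b\}) \le 0$.

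The key computation is via \eqref{eq:D-for-2}: $D_d(\{T'_a, T'_b\}) = \numof{(T'_a \cup T'_b)} - (d+1)$. Since $T'_a \cup T'_b \subset T_i$ and $\numof{T_i} \le d$ by condition \ref{item:DefOfLattice2}, we get $\numof{(T'_a \cup T'_b)} \le d$, hence $D_d(\{T'_a, T'_b\}) \le d - (d+1) = -1 < 0$. This contradicts condition \ref{item:DefOfLattice1} applied to $T' \in L(n,d)$ (noting $\numof{\{T'_a,T'_b\}} = 2 > 1$ since $T'_a \ne T'_b$). Therefore the $T'_j \subset T_i$ is unique.

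I expect there to be essentially no obstacle: the argument is a short application of \eqref{eq:D-for-2} together with the two defining conditions of $L(n,d)$. The one point requiring a moment's care is the edge case: one should confirm that $T'_a$ and $T'_b$ are genuinely distinct elements of $T'$ so that $\{T'_a, T'_b\}$ is a two-element subset of $T'$ to which condition \ref{item:DefOfLattice1} applies — this is exactly what the assumption "two distinct sets" provides. It is also worth remarking that the hypothesis $T < T'$ is used only to guarantee existence (so the statement is nonvacuous); uniqueness holds for any $T' \in L(n,d)$ and any finite set $T_i$ with $\numof{T_i} \le d$.
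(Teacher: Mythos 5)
Your proposal is correct and is essentially the same argument as the paper's: both establish existence directly from the definition of the order in \eqref{eq:partial-order}, and both obtain uniqueness by noting that two distinct $T'_a,T'_b\subset T_i$ force $\numof{(T'_a\cup T'_b)}\le\numof{T_i}\le d$, which via \eqref{eq:D-for-2} contradicts $D_d(\{T'_a,T'_b\})>0$. The only difference is cosmetic: the paper deduces $\numof{T_i}>d+1$ from $D_d>0$, while you deduce $D_d\le 0$ from $\numof{T_i}\le d$.
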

\begin{proof}
It suffices to show the uniqueness.
Let $T_i \in T$ and $T'_j, T'_k \in T'$, $j\neq k$, 
satisfy $T'_j, T'_k \subset T_i$.
This means $T'_j\cup T'_k \subset T_i$.
Since $D_d(\Set{T'_j,T'_k})>0$, 
by  \eqref{eq:D-for-2},
$\numof{T_i}\geq\numof{(T'_j \cup T'_k)} > d+1$.
This conflicts with $\numof{T_i}\leq d$.
\end{proof}

\begin{lemma}
\label{lemma:3.2}
Let $T=\Set{T_1,\ldots,T_l}$ and $S=\Set{S_1,\ldots,S_l}$.
If $T_i \subset S_i$ for all $i$, then $D_d(S)\geq D_d(T)$.
\end{lemma}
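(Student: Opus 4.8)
\textbf{Proof plan for Lemma~\ref{lemma:3.2}.}
The plan is to use the alternative formula \eqref{eq:Dd-alternative} from Remark~\ref{remark:D for 2}, which rewrites $D_d$ purely in terms of cardinalities:
\[
D_d(\Set{T_1,\ldots,T_l})
=-(l-1)(d+1)+\sum_{i=1}^l \numof{T_i}-\numof{(T_1\cap\cdots\cap T_l)}.
\]
Since $T$ and $S$ have the same number $l$ of members, the term $-(l-1)(d+1)$ is identical for both, so the inequality $D_d(S)\geq D_d(T)$ reduces to
\[
\sum_{i=1}^l \numof{S_i}-\numof{(S_1\cap\cdots\cap S_l)}
\;\geq\;
\sum_{i=1}^l \numof{T_i}-\numof{(T_1\cap\cdots\cap T_l)}.
\]

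To establish this, I would argue one index at a time, passing from $T$ to $S$ by enlarging a single coordinate at each step. That is, it suffices to treat the case where $S_i = T_i$ for all $i\neq k$ and $T_k\subset S_k$ for one fixed index $k$; the general statement then follows by composing $l$ such steps (or by a single application to the difference $S_k\setminus T_k$ distributed coordinatewise). In this one-step case, write $A = S_1\cap\cdots\cap S_l$ (the new intersection) and $B$ for the old intersection $T_1\cap\cdots\cap T_l$; note $B = A \cap T_k$ since the two tuples differ only in the $k$-th slot, hence $B\subset A$. The left-hand sum exceeds the right-hand sum by exactly $\numof{S_k}-\numof{T_k}$, so the claim becomes
\[
\numof{S_k}-\numof{T_k}\;\geq\;\numof{A}-\numof{B}.
\]
The right side equals $\numof{A\setminus B} = \numof{A\setminus T_k}$, while the left side equals $\numof{S_k\setminus T_k}$; since $A\subset S_k$ we have $A\setminus T_k\subset S_k\setminus T_k$, giving the inequality. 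This completes the step, and induction on the number of altered coordinates finishes the proof.

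The main (and only mild) obstacle is purely bookkeeping: making the reduction to a single-coordinate enlargement airtight, in particular checking that the intermediate tuples are still legitimate inputs to $D_d$ (they are, since $D_d$ via \eqref{eq:Dd-alternative} is defined for any finite sets, with no condition assumed in the statement of the lemma) and that the intersection behaves as claimed, $B = A\cap T_k$. An alternative that avoids induction is to observe directly that $\sum_i\numof{T_i}-\numof{\bigcap_i T_i}$ is, up to the constant $-(l-1)(d+1)$, exactly the quantity measuring ``total overlap spread out over the union'', and monotonicity in each $T_i$ is a standard inclusion-exclusion estimate; but the one-step induction above is the cleanest route and I would write it that way.
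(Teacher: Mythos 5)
Your proof is correct, and it takes a mildly different route from the paper's. Both arguments start from \eqref{eq:Dd-alternative} and reduce the lemma to monotonicity of $\sum_i \numof{S_i}-\numof{\bigl(S_1\cap\cdots\cap S_l\bigr)}$ under coordinatewise enlargement, but the paper proves this in one shot: it sets $S_i'=S_i\setminus T_i$, covers $\bigcap_i S_i$ by $\bigcap_i T_i$ together with the sets $S_i'\cap\bigcap_j S_j$, and invokes subadditivity of cardinality. You instead enlarge one coordinate at a time and note that in a single step the intersection grows only by $A\setminus T_k\subset S_k\setminus T_k$ (your identity $B=A\cap T_k$, which uses $T_k\subset S_k$, is exactly right), so the growth is at most $\numof{S_k}-\numof{T_k}$; induction on the number of altered coordinates finishes it. Your single-step inequality is arguably more elementary than the paper's covering estimate, at the cost of an induction and of intermediate tuples; the paper avoids intermediate tuples entirely. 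The one technical point you already flag is handled correctly: intermediate tuples may contain repeated members, so $D_d$ should be read as defined for arbitrary tuples of finite sets via \eqref{eq:Dd-alternative} (the paper's definition nominally says ``distinct,'' but neither proof uses distinctness), and with that reading your argument is airtight.
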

\begin{proof}
Let $S'_i =S_i\setminus T_i$. 
Then by \eqref{eq:Dd-alternative}
\begin{align*}
D_d(S)-D_d(T)
&=\sum_{i=1}^l(\numof{S_i}- \numof{T_i}) 
+\numof{\bigcap_{i=1}^l T_i}-\numof{\bigcap_{i=1}^l S_i}\\
&=\sum_{i=1}^l\numof{S'_i}
+\numof{\bigcap_{i=1}^l T_i}-\numof{\bigcap_{i=1}^l S_i}.
\end{align*}
Since 
$\bigcap_i S_i=\bigcap_{i} (S'_i\cup T_i ) 
=(\bigcap_i T_i) 
\cup \big(S'_1\cap\bigcap_j S_j\big)
\cup \big(S'_2\cap\bigcap_j S_j\big)
\cup\cdots
\cup \big(S'_l\cap\bigcap_j S_j\big)$,
\begin{align*}
\numof{\bigcap_{i=1}^l S_i}
\leq 
\numof{\bigcap_{i=1}^l T_i}
+\numof{(S'_1\cap\bigcap_{j=1}^l S_j)}
+\cdots+\numof{(S'_l\cap\bigcap_{j=1}^l S_j)}.
\end{align*}
This implies
\begin{align*}
D_d(S)-D_d(T)
&\geq \sum_{i=1}^l\numof{S'_i}+\numof{\bigcap_{i=1}^l T_i}
-\numof{\bigcap_{i=1}^{l}T_i}
-\sum_{i=1}^l \numof{(S'_{i}\cap\bigcap_{j=1}^l S_j)}\\
&= \sum_{i=1}^l\numof{S'_i}
-\sum_{i=1}^l \numof{(S'_i\cap\bigcap_{j=1}^l S_j)}\\
&= \sum_{i=1}^l\numof{(S'_i\setminus \bigcap_{j=1}^l S_j)}.
\end{align*}
Hence $D_d(S)-D_d(T)\geq 0$.
\end{proof}

\begin{lemma}
\label{lemma:composition}
Let $T=\Set{T_1,\ldots,T_l}$,
$S^{(1)}=\Set{S^{(1)}_{1},\ldots,S^{(1)}_{m_1}}$,
$S^{(2)}=\Set{S^{(2)}_{1},\ldots,S^{(2)}_{m_2}}$, \ldots, 
$S^{(l)}=\Set{S^{(l)}_{1},\ldots,S^{(l)}_{m_l}}$, and
$S=S^{(1)}\cup\cdots\cup S^{(l)}$.
Assume $T_i\subset S^{(i)}_{j}$ for all $i,j$.
If $D_d(T)>0$ and $D_d(S^{(i)})>0$ for all $i$,
then $D_d(S)>0$.
\end{lemma}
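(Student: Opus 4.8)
The plan is to use the alternative formula \eqref{eq:Dd-alternative} for $D_d$ and reduce the inequality $D_d(S)>0$ to a sum of contributions that can be controlled by the hypotheses $D_d(T)>0$ and $D_d(S^{(i)})>0$. First I would write, using $\numof{S}=m_1+\cdots+m_l=:m$, the expression
\[
D_d(S) = -(m-1)(d+1) + \sum_{i=1}^l\sum_{j=1}^{m_i}\numof{S^{(i)}_j} - \numof{\bigcap_{i,j} S^{(i)}_j}.
\]
Similarly, each $D_d(S^{(i)}) = -(m_i-1)(d+1) + \sum_{j}\numof{S^{(i)}_j} - \numof{\bigcap_j S^{(i)}_j}$, and $D_d(T) = -(l-1)(d+1) + \sum_i \numof{T_i} - \numof{\bigcap_i T_i}$. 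Summing the $D_d(S^{(i)})$ over $i$ and comparing with $D_d(S)$, the coefficient of $(d+1)$ works out: $\sum_i (m_i-1) = m-l$, so $-(m-1) = -(m-l) - (l-1)$, which is exactly the split matching one copy of $T$'s bookkeeping term. Thus
\[
D_d(S) = \sum_{i=1}^l D_d(S^{(i)}) + D_d(T) - \sum_{i=1}^l\numof{T_i} + \numof{\bigcap_{i=1}^l T_i} + \sum_{i=1}^l\numof{\bigcap_j S^{(i)}_j} - \numof{\bigcap_{i,j} S^{(i)}_j}.
\]

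The remaining task is to show the correction terms on the right are nonnegative; combined with $D_d(T)>0$ and $D_d(S^{(i)})>0$ this yields $D_d(S)>0$. Let $A_i := \bigcap_j S^{(i)}_j$, so $T_i \subset A_i$ (since $T_i\subset S^{(i)}_j$ for all $j$) and $\bigcap_{i,j} S^{(i)}_j = \bigcap_i A_i$. The correction is
\[
\Big(\numof{\bigcap_i T_i} - \sum_i \numof{T_i}\Big) + \Big(\sum_i \numof{A_i} - \numof{\bigcap_i A_i}\Big).
\]
Here I would invoke Lemma \ref{lemma:3.2} applied to the families $\Set{T_1,\dots,T_l}$ and $\Set{A_1,\dots,A_l}$ (valid since $T_i\subset A_i$): it gives $D_d(\Set{A_1,\dots,A_l})\geq D_d(\Set{T_1,\dots,T_l})$, i.e.
\[
\sum_i\numof{A_i} - \numof{\bigcap_i A_i} \;\geq\; \sum_i \numof{T_i} - \numof{\bigcap_i T_i},
\]
which is precisely the statement that the correction term is $\geq 0$. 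One caveat: Lemma \ref{lemma:3.2} as stated compares $D_d$ of two $l$-element families without requiring them to lie in $L(n,d)$, so it applies directly here; I should double-check that the $A_i$ need not be distinct, but since the inequality is purely set-theoretic (via \eqref{eq:Dd-alternative}) this is harmless, and if distinctness is needed for the notation one can argue by the same computation as in the proof of Lemma \ref{lemma:3.2}.

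Putting it together: $D_d(S) = \sum_i D_d(S^{(i)}) + D_d(T) + \big(D_d(\Set{A_i}) - D_d(\Set{T_i})\big) > 0 + 0 + 0$. The main obstacle I anticipate is getting the $(d+1)$-bookkeeping split exactly right and confirming that Lemma \ref{lemma:3.2} is applicable in the form needed (in particular that it does not secretly require the families to be antichains in $L(n,d)$); once that is settled the argument is a short computation. An alternative, if one prefers to avoid Lemma \ref{lemma:3.2}, is to expand $\bigcap_i A_i = \bigcap_i(\text{stuff}) $ and bound $\numof{\bigcap_i A_i}$ by $\numof{\bigcap_i T_i} + \sum_i \numof{A_i\setminus \bigcap_j A_j}$ directly, mirroring the inclusion-exclusion step in Lemma \ref{lemma:3.2}; but reusing the lemma is cleaner.
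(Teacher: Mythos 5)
Your proof is correct and is essentially the paper's own argument: the paper likewise expands $D_d(S)$ via \eqref{eq:Dd-alternative}, splits $-(m-1)(d+1)=-\sum_i(m_i-1)(d+1)-(l-1)(d+1)$ to get the exact identity $D_d(S)=\sum_i D_d(S^{(i)})+D_d(\Set{\bigcap_j S^{(1)}_j,\dots,\bigcap_j S^{(l)}_j})$, and then applies Lemma \ref{lemma:3.2} with $T_i\subset \bigcap_j S^{(i)}_j$ to bound the last term below by $D_d(T)$. Your caveat about possible repetitions among the $A_i$ is harmless for the same reason it is in the paper: the computation is purely set-theoretic and valid for the tuple.
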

\begin{proof}
Let $m=\sum_{i=1}^{l} m_i$. 
Then
\[
-(m-1)(d+1)=-(l-1)(d+1)-\sum_{i=1}^{l}(m_i-1)(d+1).
\] 
Hence 
\begin{align*}
&D_d(S)\\
&=-(m-1)(d+1) + \sum_{i=1}^l\sum_{j=1}^{m_i}\numof{S^{(i)}_{j}}
-\numof{\bigcap_{i=1}^{l}\bigcap_{j=1}^{m_i}S^{(i)}_{j}},\\
&=-(l-1)(d+1) -\sum_{i=1}^l (m_i -1)(d+1)
+\sum_{i=1}^l\sum_{j=1}^{m_i}\numof{S^{(i)}_{j}}
-\numof{\bigcap_{i=1}^{l}\bigcap_{j=1}^{m_i}S^{(i)}_{j}}\\
&=\sum_{i=1}^l (-(m_i -1)(d+1)+\sum_{j=1}^{m_i}\numof{S^{(i)}_{j}})
-(l-1)(d+1)-\numof{\bigcap_{i=1}^{l}\bigcap_{j=1}^{m_i}S^{(i)}_{j}}.
\end{align*}
Since 
$D_d(S^{(i)})+ \numof{\bigcap_{j=1}^{m_i}S^{(i)}_{j}}=-(m_i -1)(d+1)+\sum_{j=1}^{m_i}\numof{S^{(i)}_{j}}$,
\begin{align*}
D_d(S)
&=\sum_{i=1}^l D_d(S^{(i)})-(l-1)(d+1)
+\sum_{i=1}^l \numof{\bigcap_{j=1}^{m_i}S^{(i)}_{j}}
-\numof{\bigcap_{i=1}^{l}\bigcap_{j=1}^{m_i}S^{(i)}_{j}}\\
&=\sum_{i=1}^l D_d(S^{(i)})+
D_d\Big(\Set{\bigcap_{j=1}^{m_1}S^{(1)}_{j},\ldots, \bigcap_{j=1}^{m_l}S^{(l)}_{j}}\Big).
\end{align*}
Since $T_i \subset \bigcap_{j=1}^{m_i} S^{(i)}_{j}$,
it follows from Lemma \ref{lemma:3.2} that
\begin{align*}
D_d(S)&\geq\sum_{i=1}^l D_d(S^{(i)})+D_d(T)> 0.
\end{align*}
\end{proof}

\begin{lemma}
\label{thm:structure of ideal}
Let $T\in L(n,d)$, $T_1 \in T$ and $T' = T\setminus \Set{T_1}$.
Then $\ideal_{n,d}(\Set{T_1})\times \ideal_{n,d}(T')$
and $\ideal_{n,d} (T)$ are isomorphic as posets.
\end{lemma}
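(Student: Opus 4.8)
The plan is to construct an explicit order-preserving bijection between $\ideal_{n,d}(T)$ and $\ideal_{n,d}(\Set{T_1})\times \ideal_{n,d}(T')$ and verify it is an isomorphism. First I would set up the candidate map. Given $S \in \ideal_{n,d}(T)$, so $S \le T$, Proposition \ref{lemma:uniqueex} tells us that for each $T_i \in T$ there is a unique $T_i$-block of $S$, namely the subset $S^{(i)} = \Set{S_j \in S \mid S_j \subset T_i}$; and since $S \le T$ forces every $S_j$ to lie inside \emph{some} $T_i$, the sets $S^{(1)}, \dots, S^{(l)}$ partition $S$ (with $T = \Set{T_1, \dots, T_l}$, $T' = \Set{T_2,\dots,T_l}$). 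The proposed map sends $S \mapsto (S^{(1)}, S^{(2)}\cup\dots\cup S^{(l)})$. One must check that $S^{(1)} \in \ideal_{n,d}(\Set{T_1})$ and $S^{(2)}\cup\dots\cup S^{(l)} \in \ideal_{n,d}(T')$: both membership in $L(n,d)$ (conditions \ref{item:DefOfLattice1}, \ref{item:DefOfLattice2} pass to subsets) and the order relations $S^{(1)} \le \Set{T_1}$, $S^{(2)}\cup\dots\cup S^{(l)} \le T'$ follow from \eqref{eq:partial-order}, noting $\rho_d$ is additive over the disjoint pieces so $\rho_d(S^{(i)\text{-part}}) < \rho_d$ of the corresponding part of $T$ once one checks strictness, which I address below.

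The inverse map takes $(U, W)$ with $U \le \Set{T_1}$ and $W \le T'$ to $U \cup W$. The essential point is that $U \cup W$ actually lies in $L(n,d)$, i.e.\ that condition \ref{item:DefOfLattice1} holds for the union: this is exactly where Lemma \ref{lemma:composition} enters. For any subset $\Set{R_1,\dots,R_k}$ of $U \cup W$ with more than one element, I would group its members according to which $T_i$ contains them, apply Lemma \ref{lemma:composition} with the groups playing the role of the $S^{(i)}$ and with the relevant sub-collection of $T$ playing the role of $T$ (its $D_d$ is positive by condition \ref{item:DefOfLattice1} applied to $T$ itself), and conclude $D_d(\Set{R_1,\dots,R_k}) > 0$. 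One subtlety: if all the $R$'s lie inside a single $T_i$, Lemma \ref{lemma:composition} is not needed — positivity is inherited directly from $U$ or $W$ being in $L(n,d)$. Condition \ref{item:DefOfLattice2} is immediate since $U, W$ already satisfy it.

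Having both maps well-defined, I would check they are mutually inverse — this is essentially tautological once the $T_i$-block decomposition is known to be a genuine partition of $S$ (from Proposition \ref{lemma:uniqueex} plus the ``every $S_j$ lies in some $T_i$'' part of $S \le T$). Finally, order preservation in both directions: if $S \le \tilde S$ in $\ideal_{n,d}(T)$, then by Proposition \ref{lemma:uniqueex} each block $S^{(i)}$ refines into the block $\tilde S^{(i)}$ in the sense required by \eqref{eq:partial-order}, and $\rho_d$ being additive over blocks means the rank inequality splits as $\rho_d(S^{(1)}) \le \rho_d(\tilde S^{(1)})$ and $\rho_d(S^{(2)\cdots l)}) \le \rho_d(\tilde S^{(2)\cdots l)})$ — except that $\le$ must be strict overall, so I would observe that a non-strict inequality in \emph{both} coordinates together with component-wise set-refinement forces $S = \tilde S$, so the product order (strict iff strict in at least one coordinate, with $\le$ in the other) matches. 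Conversely, a pair of order relations in the two factors assembles to an order relation of the unions by the same additivity of $\rho_d$ and by taking unions of the witnessing containments.

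The main obstacle I expect is the correct bookkeeping of the rank function and the strictness convention in \eqref{eq:partial-order}: the partial order on $L(n,d)$ is defined with a \emph{strict} rank inequality, so one has to be careful that the bijection is an isomorphism for the induced order on the product (where a pair $(U,W) \le (\tilde U, \tilde W)$ should mean $U \le \tilde U$ or $U = \tilde U$, and likewise for $W$, not both strict). Checking that $\rho_d(U \cup W) = \rho_d(U) + \rho_d(W)$ — immediate from the definition of $\rho_d$ as a sum of codimensions over a disjoint union of collections — is what makes this reconcile cleanly, but it needs to be stated explicitly. The appeal to Lemma \ref{lemma:composition} for well-definedness of the inverse is the only genuinely non-trivial algebraic input, and the rest is careful unwinding of definitions.
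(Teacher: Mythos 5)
Your route is the same as the paper's (a union map in one direction, a block decomposition via Proposition \ref{lemma:uniqueex} in the other, Lemma \ref{lemma:composition} to get membership in $L(n,d)$), but two steps fail as written. First, the containment direction is reversed throughout: by \eqref{eq:partial-order} (see also the proof of Lemma \ref{lem:single-set-ideal}), $S\le T$ forces each $S_j\in S$ to \emph{contain} some $T_i$, not to lie inside one, so the blocks must be $S^{(i)}=\Set{S_j\in S | T_i\subset S_j}$ rather than $\Set{S_j\in S | S_j\subset T_i}$ (with your definition the blocks are generically empty), and in the inverse direction the members of $U\cup W$ must be grouped by which $T_i$ they \emph{contain}; Lemma \ref{lemma:composition} assumes $T_i\subset S^{(i)}_j$, so with your grouping its hypothesis is not satisfied. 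This is likely a slip, but it occurs consistently and, taken literally, breaks both maps.

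The more substantive gap is the claim that additivity of $\rho_d$ over the blocks makes the rank inequality ``split'' componentwise. From $\rho_d(S)<\rho_d(\tilde S)$ and $\rho_d(S)=\sum_i\rho_d(S^{(i)})$ one cannot deduce $\rho_d(S^{(i)})\le\rho_d(\tilde S^{(i)})$ for each $i$: a sum can decrease while an individual summand increases. Yet these per-block inequalities are exactly what your argument needs, both for well-definedness of the decomposition map (to get $S^{(1)}\le\Set{T_1}$ one needs $\rho_d(S^{(1)})<\codim_d(T_1)$ unless $S^{(1)}=\Set{T_1}$) and for order preservation in that direction. What closes the gap is a separate observation, e.g.\ that for distinct $A,B\in L(n,d)$ the containment condition alone (every element of $A$ contains an element of $B$) already forces $\rho_d(A)<\rho_d(B)$ --- under the isomorphism with $L(\AAA_{n,d})$ it gives $H(A)\supsetneq H(B)$ and $\rho_d$ is the codimension --- and this must be stated; it does not follow from additivity. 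Note that the paper avoids the difficulty by taking the union map $\varphi(S,S')=S\cup S'$ on the product as the primary map: there the componentwise relations $S\le\Set{T_1}$ and $S'\le T'$ are hypotheses, so the rank inequality for the union follows by simply adding them, and the only nontrivial input is well-definedness via Lemma \ref{lemma:composition}. Restructuring your proof in that direction (with the containment direction corrected, and with your correct remark about subsets of $U\cup W$ lying in a single block) would repair it.
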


\begin{proof}
Let $T\in L(n,d)$, $T_1 \in T$ and $T' = T\setminus \Set{T_1}$.
For $(S,S')\in \ideal_{n,d}(\Set{T_1})\times \ideal_{n,d}(T')$,
let us define $\varphi(S,S')=S\cup S'$.
Then $\varphi(S,S') \in L(n,d)$ by Lemma \ref{lemma:composition}.
By definition $\varphi(S,S') \leq T$. 
Hence $\varphi$ is a map 
from $\ideal_{n,d}(\Set{T_1})\times \ideal_{n,d}(T')$
to $\ideal_{n,d} (T)$.
Moreover, 
if $(S,S')$ and $(S'',S''')$ satisfy  $S\leq S''$ and $S'\leq S'''$, 
then $\varphi(S,S')\leq \varphi(S'',S''')$.
On the other hand,
we can define the following map $\psi$ 
from $\ideal_{n,d} (T)$
to $\ideal_{n,d}(\Set{T_1})\times \ideal_{n,d}(T')$:
\begin{align*}
\psi(S)=(\Set{S_i|T_1 \subset S_i},\Set{S_i | T_1 \not\subset S_i}),
\end{align*}
which is the inverse map of $\varphi$.
Hence $\ideal_{n,d}(\Set{T_1})\times \ideal_{n,d}(T')$ and $\ideal_{n,d} (T)$
are isomorphic as posets.
\end{proof}

Applying Lemma \ref{thm:structure of ideal} recursively,
we have Theorem \ref{thm:structure of ideal is directprod}.

\section{Computation of M\"obius function and the characteristic polynomial}
\label{sec:computation}

In this section 
we apply Theorem \ref{thm:structure of ideal is directprod} 
to compute the M\"obius function and the characteristic polynomial 
of the intersection lattice $L(n,d)$ for $d\le 6$.  
This section is divided into four subsections.  

In Section \ref{subsec:mobius} 
we derive an explicit formula 
for the value of the M\"obius function of $L(n,d)$ 
and show that it only depends on the type of $T\in L(n,d)$.  
Next in Section \ref{subsec:number-of-elements} 
we derive a formula for the number of elements of the same type as $T\in L(n,d)$.
Then in Section \ref{subsec:identities} 
we derive some identities for these numbers, 
which are useful for checking the results of computations by computer.  
Finally in Section \ref{subsec:tables} 
we present lists of the numbers of elements and the characteristic polynomials
for $d\le 6$.

\subsection{M\"obius function of the intersection lattice}
\label{subsec:mobius}

We first obtain the value of M\"obius function of $L(n,d)$.

\begin{proposition}
\label{thm:decomposition}
For $T\in L(n,d)$, $T\neq \emptyset$,
\begin{align*}
\mu_{n,d}(T)=
\prod_{T_i \in T}\mu_{n-\numof{T_i},d-\numof{T_i}}(\Set{\emptyset}).
\end{align*}
\end{proposition}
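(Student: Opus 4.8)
The plan is to compute $\mu_{n,d}(T)=\mu_{n,d}(\emptyset,T)$ by recognizing it as a Möbius-function value on the interval $[\emptyset,T]$, identifying that interval with the ideal $\ideal_{n,d}(T)$, and then invoking Theorem \ref{thm:structure of ideal is directprod} together with the multiplicativity of the Möbius function over direct products of posets.

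First I would note that, since $\emptyset$ is the minimum element of $L(n,d)$ (Remark \ref{rem:max-element}), the value $\mu_{n,d}(\emptyset,T)$ depends only on the closed interval $[\emptyset,T]=\Set{S\in L(n,d)|S\le T}=\ideal_{n,d}(T)$, in which $\emptyset$ is the bottom element and $T$ the top element. By Theorem \ref{thm:structure of ideal is directprod}, $\ideal_{n,d}(T)$ is isomorphic as a poset to $\prod_{T_i\in T}\ideal_{n,d}(\Set{T_i})$. The isomorphism built in the proof of Lemma \ref{thm:structure of ideal} sends $S\mapsto\bigl(\Set{S_i|T_j\subset S_i}\bigr)_{T_j\in T}$, and one checks directly that it carries $\emptyset$ to the tuple $(\emptyset,\dots,\emptyset)$ of bottom elements and $T$ to the tuple $(\Set{T_1},\dots,\Set{T_l})$ of top elements, since each $\Set{T_j}$ is the maximum of $\ideal_{n,d}(\Set{T_j})$. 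This bottom/top compatibility is the one point I would spell out carefully, as it is what licenses the next step.

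Next I would apply the standard fact (see e.g.\ Section 3 of \cite{stanley-book-1}) that the Möbius function of a direct product satisfies $\mu\bigl((x_1,\dots,x_l),(y_1,\dots,y_l)\bigr)=\prod_{i}\mu_{P_i}(x_i,y_i)$. Evaluating between the bottom and top elements of $\ideal_{n,d}(T)$ through the isomorphism above gives
\[
\mu_{n,d}(T)=\mu_{n,d}(\emptyset,T)=\prod_{T_i\in T}\mu_{n,d}(\emptyset,\Set{T_i})=\prod_{T_i\in T}\mu_{n,d}(\Set{T_i}).
\]
Finally, to identify each factor, I would use Lemma \ref{lem:single-set-ideal}: the map $\Set{S_1,\dots,S_m}\mapsto\Set{S_1\setminus T_i,\dots,S_m\setminus T_i}$ is a poset isomorphism $\ideal_{n,d}(\Set{T_i})\to L(n-\numof{T_i},d-\numof{T_i})$ which sends $\emptyset$ to $\emptyset$ and the top element $\Set{T_i}$ to $\Set{T_i\setminus T_i}=\Set{\emptyset}$; hence $\mu_{n,d}(\Set{T_i})=\mu_{n-\numof{T_i},d-\numof{T_i}}(\emptyset,\Set{\emptyset})=\mu_{n-\numof{T_i},d-\numof{T_i}}(\Set{\emptyset})$. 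Substituting into the displayed product yields the assertion. As an immediate byproduct, $\mu_{n,d}(T)$ depends only on the type $\gamma_d(T)$, since each factor $\mu_{n-\numof{T_i},d-\numof{T_i}}(\Set{\emptyset})$ depends only on $\numof{T_i}=d+1-\codim_d(T_i)$.

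I do not anticipate a genuine obstacle: once Theorem \ref{thm:structure of ideal is directprod} is available, the argument is bookkeeping. The only delicate point, as noted above, is checking that the two families of isomorphisms (from the structure theorem and from Lemma \ref{lem:single-set-ideal}) match up bottom with bottom and top with top, so that multiplicativity of $\mu$ may be applied precisely between the pair $(\emptyset,T)$; this is what I would verify in detail.
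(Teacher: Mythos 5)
Your proposal is correct and follows essentially the same route as the paper: the paper also derives the statement immediately from Theorem \ref{thm:structure of ideal is directprod} together with the product formula for M\"obius functions (Proposition 3.8.2 of \cite{stanley-book-1}), with Lemma \ref{lem:single-set-ideal} identifying each factor $\mu_{n,d}(\Set{T_i})$ with $\mu_{n-\numof{T_i},d-\numof{T_i}}(\Set{\emptyset})$. The only difference is that you spell out the bottom/top compatibility of the isomorphisms, which the paper leaves implicit.
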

Note that for $T=\emptyset$ we have  $\mu_{n,d}(\emptyset)=1$.
Also, 
as discussed at the beginning of Section \ref{subsec:illustration}, 
$\mu_{n-\numof{T_i},d-\numof{T_i}}(\Set{\emptyset})=-1$
if $d=\numof{T_i}$.

Proposition \ref{thm:decomposition} 
is an immediate
consequence of Theorem \ref{thm:structure of ideal is directprod} and
the following well-known lemma.

\begin{lemma}[Proposition 3.8.2 of \cite{stanley-book-1}]
Let $P$ and $P'$ be posets, and  $P\times P'$ the direct product of
 posets $P$ and $P'$.
Then $\mu_P(S,T)\cdot\mu_{P'}(S',T')=\mu_{P\times P'}((S,S'),(T,T'))$
for $S,T\in P$ and $S',T' \in P'$,
where $\mu$ denotes the M\"obius function for each poset.
\end{lemma}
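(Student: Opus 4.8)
The plan is to verify that the function $\nu$ on comparable pairs of $P\times P'$ defined by $\nu((S,S'),(T,T'))=\mu_P(S,T)\,\mu_{P'}(S',T')$ satisfies the two defining relations of $\mu_{P\times P'}$, and then to invoke uniqueness. Recall that on a locally finite poset $Q$ (in our application $Q$ is finite) the M\"obius function is the unique function on comparable pairs with $\mu_Q(x,x)=1$ and $\sum_{x\le z\le y}\mu_Q(x,z)=0$ whenever $x<y$; uniqueness follows by induction on $\numof{[x,y]}$, since the relation for $(x,y)$ expresses $\mu_Q(x,y)$ in terms of values on strictly shorter intervals.

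First I would record the elementary but crucial fact that the order on $P\times P'$ is coordinatewise, $(S,S')\le(T,T')\iff S\le T$ and $S'\le T'$, so that every interval in the product factors as a product of intervals:
\[
[(S,S'),(T,T')]_{P\times P'} = [S,T]_P\times[S',T']_{P'}.
\]
Then I would check the two relations for $\nu$. The first is immediate: $\nu((S,S'),(S,S'))=\mu_P(S,S)\,\mu_{P'}(S',S')=1$. For the second, let $(S,S')<(T,T')$ in $P\times P'$. Using the factorization of the interval, the sum over $(U,U')$ with $(S,S')\le(U,U')\le(T,T')$ splits as
\[
\sum_{(S,S')\le(U,U')\le(T,T')}\mu_P(S,U)\,\mu_{P'}(S',U')
=\Big(\sum_{S\le U\le T}\mu_P(S,U)\Big)\Big(\sum_{S'\le U'\le T'}\mu_{P'}(S',U')\Big).
\]
Since $(S,S')\neq(T,T')$, at least one of $S<T$, $S'<T'$ holds, and the corresponding factor vanishes by the defining relation of $\mu_P$ or $\mu_{P'}$; hence the product is $0$. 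Thus $\nu$ satisfies both relations, and by uniqueness $\nu=\mu_{P\times P'}$, which is exactly the asserted identity.

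There is essentially no hard step here: the whole content is the observation that intervals in a direct product are products of intervals, after which the defining recursion for the M\"obius function separates into the two coordinates. The only point requiring a word of care is the appeal to uniqueness, which I would dispatch in one line by the induction on interval cardinality indicated above; local finiteness of $P$ and $P'$ (automatic here, as the posets in this paper are finite) guarantees the sums are finite and the M\"obius functions are well defined.
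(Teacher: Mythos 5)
Your proof is correct and complete: you verify that the product function satisfies the two defining relations of the M\"obius function on $P\times P'$, using the factorization of intervals in a direct product, and conclude by uniqueness. The paper itself offers no proof of this lemma, simply citing Proposition 3.8.2 of Stanley, and your argument is precisely the standard one given there, so there is nothing to flag.
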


Proposition \ref{thm:decomposition} shows that 
the M\"obius function of $L(n,d)$ 
is completely determined by 
the values of $\mu_{n+k-d,k}(\Set{\emptyset})$, $0 \le k\le d$.
In particular for $T\neq \Set{\emptyset}$, 
$\mu_{n,d}(T)$ is 
a product of  $\mu_{n+k-d,k}(\Set{\emptyset})$ for $k$ smaller than $d$.
As seen in the examples of Section \ref{subsec:illustration},
$\mu_{n',d'}(\set{\emptyset})$ is a polynomial in $n'$.
Hence $\mu_{n,d}(T)$, $T\neq\set{\emptyset}$, 
can be immediately obtained from $\mu_{n',d'}(\set{\emptyset})$ for $d' < d$. 
Therefore for the recursion on $d$, 
the essential step is 
to compute  $\mu_{n,d}(\set{\emptyset})$ by \eqref{eq:mumaxphi}, 
which will be discussed in the  next subsection.

As a corollary to Proposition \ref{thm:decomposition}
we have the following result.  
\begin{cor}
\label{cor:type}
Let
$T=\Set{T_1,\ldots,T_l}\in L(n,d)$ and
$T'=\Set{T'_1,\ldots,T'_l}\allowbreak \in L(n,d')$
satisfy $\codim_d(T_i)=\codim_{d'}(T'_i)$ for each $i$.
Define $\bar \mu_{u,d}(T)=\mu_{d+u,d}(T)$, $u\ge 1$.
Then
\begin{align*}
\bar\mu_{u,d}(T)=\bar\mu_{u,d'}(T').
\end{align*}
\end{cor}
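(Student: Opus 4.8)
The plan is to deduce the statement directly from Proposition~\ref{thm:decomposition}, which already expresses $\mu_{n,d}(T)$ as a product of factors $\mu_{n-\numof{T_i},d-\numof{T_i}}(\Set{\emptyset})$ indexed by the blocks $T_i$ of $T$. The whole content of the corollary is that, once we substitute $n=d+u$, each such factor depends on $(d,T_i)$ only through $u$ and through the single number $\codim_d(T_i)$; hence two elements $T\in L(n,d)$, $T'\in L(n,d')$ with matching codimensions give identical products.

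Concretely, I would first write, for $T\neq\emptyset$,
\begin{align*}
\bar\mu_{u,d}(T)=\mu_{d+u,d}(T)=\prod_{T_i\in T}\mu_{d+u-\numof{T_i},\,d-\numof{T_i}}(\Set{\emptyset})
\end{align*}
by Proposition~\ref{thm:decomposition}. Then, setting $c_i=\codim_d(T_i)=d+1-\numof{T_i}$, I would rewrite the two index slots of each factor as $d+u-\numof{T_i}=(c_i-1)+u$ and $d-\numof{T_i}=c_i-1$, so that the factor equals $\mu_{(c_i-1)+u,\,c_i-1}(\Set{\emptyset})=\bar\mu_{u,\,c_i-1}(\Set{\emptyset})$. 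This gives the intermediate identity
\begin{align*}
\bar\mu_{u,d}(T)=\prod_{i=1}^{l}\bar\mu_{u,\,\codim_d(T_i)-1}(\Set{\emptyset}),
\end{align*}
whose right-hand side manifestly depends only on $u$ and on the list of codimensions $\codim_d(T_1),\dots,\codim_d(T_l)$. Running the same computation for $T'$ with $\codim_{d'}(T'_i)$ in place of $\codim_d(T_i)$ and using the hypothesis $\codim_d(T_i)=\codim_{d'}(T'_i)$ for each $i$ then yields $\bar\mu_{u,d}(T)=\bar\mu_{u,d'}(T')$. The case $T=\emptyset$ forces $T'=\emptyset$ (both families have $l$ blocks), and both sides are the empty product $1$.

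Along the way I would record the small well-definedness checks: every singleton $\Set{T_i}$ belongs to $L(n,d)$ since condition~\ref{item:DefOfLattice1} of Definition~\ref{def:DefOfLattice} is vacuous for a one-element family and condition~\ref{item:DefOfLattice2} holds by assumption, and each $L(d+u-\numof{T_i},d-\numof{T_i})$ is a genuine lattice because $u\ge 1$; likewise on the $d'$ side, so all the $\mu(\Set{\emptyset})$ appearing are defined. I do not expect a real obstacle: the one thing to get right is the arithmetic showing that replacing $\numof{T_i}$ by $\codim_d(T_i)$ cancels the explicit $d$ in \emph{both} subscripts simultaneously, which is exactly where the hypothesis that $\codim$ (rather than $\numof{\cdot}$) matches gets used.
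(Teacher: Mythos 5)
Your proposal is correct and is essentially the paper's own argument: the corollary is stated there as an immediate consequence of Proposition~\ref{thm:decomposition}, and your substitution $d-\numof{T_i}=\codim_d(T_i)-1$, $d+u-\numof{T_i}=\codim_d(T_i)-1+u$ is exactly the re-indexing that makes each factor depend only on $u$ and the codimension. The added well-definedness remarks and the treatment of the degenerate cases are fine but not needed beyond what the paper leaves implicit.
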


In this sense 
the value of the M\"obius function 
depends only on the multiset of codimensions,
i.e., the type  $\gamma_d(T)$ of $T$.
Therefore from now on 
we denote $\mu_{n,d}(T)=\mu_{n,d}(\gamma)$ if $\gamma_d(T)=\gamma$.

\subsection{Number of elements of the intersection lattice}
\label{subsec:number-of-elements}

The results of the previous subsection implies that 
the terms of the summations in  \eqref{eq:characteristic-function} 
and \eqref{eq:mumaxphi} 
can be grouped into different types.  
Then the question is 
how to obtain  the number of elements of the same type in $L(n,d)$, 
denoted by $\lambda_{n,d}(\gamma)$ below.   
In this subsection we give an explicit expression
for $\lambda_{n,d}(\gamma)$  in Proposition \ref{prop:number-type}.

Let 
\[
\lambda_{n,d}(\gamma) = \numof{\set{T \in L(n,d)| \gamma_d(T)=\gamma}}
\]
denote the number of $T\in L(n,d)$ of type $\gamma$.
Then \eqref{eq:characteristic-function} and \eqref{eq:mumaxphi} 
are written as follows.
\begin{align}
\chi_{n,d}(t) 
&=
 \mu_{n,d}(\set{\emptyset})+
\sum_{i=0}^{d}
\sum_{\gamma \partitionof i} 
\lambda_{n,d}(\gamma)\mu_{n,d}(\gamma) t^{d+1-i}
, 
\label{eq:characteristic-type}\\
  \mu_{n,d}(\set{\emptyset})
&=- 
\sum_{i=0}^{d}
\sum_{\gamma \partitionof i} 
 \lambda_{n,d}(\gamma)\mu_{n,d}(\gamma).
\label{eq:mumaxphi-type}
\end{align}

For stating  Proposition \ref{prop:number-type}  we need some more definitions.
For 
a partition 
$\gamma=(\gamma_1, \dots, \gamma_l)$
of a nonnegative integer
let 
\[
\Stab_{S_l}(\gamma)
=\Set{\sigma\in S_l|\gamma_i=\gamma_{\sigma(i)}, i=1,\dots,l}
\]
denote the stabilizer of the symmetric group $S_l$ fixing $\gamma$.
Then we have
\begin{align*}
\numof{\Stab_{S_l}(\gamma)}
=\prod_{k} m_k(\gamma) !,
\end{align*}
where 
$m_k(\gamma)$ denotes 
the multiplicity $\numof{\Set{i|\gamma_i=k}}$ 
of $k\in \set{1,\dots,d}$ in $\gamma$.

Denote the elements of $2^{\Set{1,\ldots,l}}$ as
\[
2^{\Set{1,\ldots,l}}
=\Set{\emptyset,\set{1},\dots, \set{l},\set{1,2},\dots,\set{1,\dots,l}}
=\set{I_1, \dots, I_{2^l}},
\]
where $I_1=\emptyset$.  
Let  $N(n,d;\gamma)$, $\gamma=(\gamma_1, \dots, \gamma_l)$, $l\ge 1$, 
denote the set of maps $\nu$ 
from the power set $2^{\Set{1,\ldots,l}}$ 
to the set $\NN$ of nonnegative integers satisfying the following:
\begin{enumerate}[label=\arabic{*}) ]
\item \label{item:def-N-A}
$\sum_{I\colon i\in I} \nu(I) = d+1-\gamma_i$ for all $i=1,\dots,l$.
\item $\sum_{I'\colon I\subset I'} \nu(I')  <  d+1 - \sum_{i\in I}\gamma_i$ for all $I$ such that $\numof{I}\ge 2$.
\item \label{item:def-N-C}
$\sum_{I\in 2^{\Set{1,\ldots,l}}}\nu(I)=n$.
\end{enumerate}
\begin{example}
In the case  when $\gamma=(d+1)$, 
$\nu$ is a map from $2^{\set{1}}=\Set{ \emptyset , \set{1}}$ to $\NN$.
By the condition \ref{item:def-N-A},
$\sum_{I\colon 1\in I} \nu(I)=\nu(\set{1})=0$.
Hence, by \ref{item:def-N-C},
$\nu(\emptyset)=n$.
$N(n,d;(d+1))$ consists of this $\nu$ only.
\end{example}
\begin{remark}
Consider the elements of $\set{1,\dots,n}$ as ``symbols''. 
Each $T_i$ in $T = \set{T_1,\dots,T_l}$ is a subset of $\set{1,\dots,n}$ 
and therefore contains $\numof{T_i}$ symbols.
Also call $T_i$ a ``block''.  
We can think of $T = \set{T_1,\dots,T_l}$ 
as putting symbols $1,\dots,n$ in the blocks $T_1,\dots, T_l$.  
Some symbol appears in several blocks.  
For $I\subset \set{1,\dots,l}$, 
$\nu(I)$ denotes the number of symbols commonly contained in $T_i$, $i\in I$, 
but not contained in any other $T_i$, $i\not\in I$.  
The condition 3) on $\nu$ means that
$n$ symbols $1,\dots,n$ are  classified by the blocks containing them.
The condition 1) on $\nu$ corresponds 
to the size of each block $\numof{T_i}=d+1-\gamma_i$.
The condition 2) on $\nu$ is essential and 
corresponds to 1) of Definition \ref{def:DefOfLattice}.
\end{remark}

\begin{example}
First let us consider the case when $\gamma=(\gamma_1)$.  In this case
\begin{equation}
\label{eq:one-part}
\lambda_{n,d}((\gamma_1))
=\binom{n}{d+1-\gamma_1}.
\end{equation}

Next let us consider the case when  $\gamma=(\gamma_1,\gamma_2)$.
Let $t_i=d +1 -\gamma_i$, $i=1,2$.
For an element $T=\Set{T_1,T_2}$ of type $\gamma$, by
\eqref{eq:D-for-2},
\begin{align*}
0 &\leq \numof{(T_1 \cap T_2)} 
= \numof{T_1}+\numof{T_2} - \numof{(T_1 \cup T_2)}\\
& \leq  \numof{T_1}+\numof{T_2}
-d -2
=t_1+t_2-d-2=d-\gamma_1-\gamma_2.
\end{align*}
It also follows by definition that $\numof{(T_1 \cap T_2)} \leq \min(t_1,t_2)$.
Write $\nu=\numof{(T_1 \cap T_2)}$.
If $\gamma_1>\gamma_2$, then 
$(\codim(T_2),\codim(T_1))\neq (t_1,t_2)$
as ordered pairs.
Hence, for the case $\gamma_1 > \gamma_2$, 
\begin{align}
\label{eq:two-part-1}
\lambda_{n,d}((\gamma_1, \gamma_2)) 
=
\sum_{\nu=0}^{\min(t_1, t_2, d-\gamma_1 -\gamma_2)}
\binom{n}{\nu}\binom{n-\nu}{t_1-\nu}\binom{n-t_1}{t_2-\nu}.
\end{align}
On the other hand,
if $\gamma_1=\gamma_2$, then 
$(\codim(T_2),\codim(T_1))= (\gamma_1,\gamma_2)$
as ordered pairs
for all elements $T=\Set{T_1,T_2}$ of type $\gamma$.
Since $T_1\neq T_2$, for the case  $\gamma_1 = \gamma_2$, 
\begin{align}
\label{eq:two-part-2}
\lambda_{n,d}((\gamma_1, \gamma_2)) 
=
\frac{1}{2}\sum_{\nu=0}^{\min(t_1, t_2, d-\gamma_1 -\gamma_2)}
\binom{n}{\nu}\binom{n-\nu}{t_1-\nu}\binom{n-t_1}{t_2-\nu}.
\end{align}
\end{example}

Now we present the following proposition.
\begin{proposition}
\label{prop:number-type}
For $\gamma\neq (0)$,
$\lambda_{n,d}(\gamma)$  
is given  as follows.
\begin{align}
\label{eq:number-type}
\lambda_{n,d}(\gamma)
&=\frac{1}{\prod_{k=1}^d m_k(\gamma) !}
\sum_{\nu\in N(n,d;\gamma)} 
\frac{n!}{\nu(I_1)! \nu(I_2)!\cdots \nu(I_{2^l})!}
\\
&=\frac{1}{\prod_{k=1}^d m_k(\gamma) !}
\sum_{\nu\in N(n,d;\gamma)} 
\frac{(\nu(I_2)+ \dots + \nu(I_{2^l}))!}{\nu(I_2)! \cdots \nu(I_{2^l})!}
\binom{n}{\nu(I_2)+ \dots + \nu(I_{2^l})}. \nonumber
\end{align}
\end{proposition}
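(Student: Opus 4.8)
The plan is to count \emph{ordered} tuples first and then quotient by the obvious symmetry. Since $\gamma\neq(0)$ we have $l\ge 1$ and every part satisfies $\gamma_i\ge 1$. Fix the ordered partition $\gamma=(\gamma_1,\dots,\gamma_l)$ and consider ordered tuples $(T_1,\dots,T_l)$ of subsets of $\set{1,\dots,n}$ with $\numof{T_i}=d+1-\gamma_i$ such that $T=\set{T_1,\dots,T_l}$ satisfies condition \ref{item:DefOfLattice1} of Definition \ref{def:DefOfLattice}; note this condition depends only on $T$ as a set, not on the ordering. Such a tuple is completely recorded by a pair $(\nu,\alpha)$, where $\nu\colon 2^{\set{1,\dots,l}}\to\NN$ sends $I$ to the number of symbols $j\in\set{1,\dots,n}$ with $\Set{i|j\in T_i}=I$, and $\alpha$ records which specific symbols realize each class $I$; conversely any such pair builds a tuple, so the correspondence is a bijection, and for a fixed $\nu$ the number of admissible $\alpha$ is the multinomial coefficient $n!/\bigl(\nu(I_1)!\cdots\nu(I_{2^l})!\bigr)$.

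I would then identify exactly which $\nu$ arise. Condition 3) in the definition of $N(n,d;\gamma)$ is just $\sum_I\nu(I)=n$, condition 1) is $\numof{T_i}=\sum_{I\ni i}\nu(I)=d+1-\gamma_i$, and condition \ref{item:DefOfLattice2} holds automatically since $\numof{T_i}=d+1-\gamma_i$ with $\gamma_i\ge 1$. Since $\numof{\bigcap_{i\in I}T_i}=\sum_{I'\supseteq I}\nu(I')$, the alternative expression \eqref{eq:Dd-alternative} for $D_d$ yields, for every $I$ with $\numof{I}=m\ge 2$,
\[
D_d\bigl(\Set{T_i|i\in I}\bigr)=(d+1)-\sum_{i\in I}\gamma_i-\sum_{I'\supseteq I}\nu(I'),
\]
so requiring $D_d(\Set{T_i|i\in I})>0$ for all such $I$ — that is, condition \ref{item:DefOfLattice1} of Definition \ref{def:DefOfLattice} for $T$ — is exactly condition 2) in the definition of $N(n,d;\gamma)$. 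Moreover the $T_i$ are forced to be pairwise distinct: if $T_i=T_j$ with $i\ne j$ then $\sum_{I'\supseteq\set{i,j}}\nu(I')=\numof{T_i}=d+1-\gamma_i$, which contradicts condition 2) for $I=\set{i,j}$ because $\gamma_j\ge 1$. Hence the number of admissible ordered tuples equals $\sum_{\nu\in N(n,d;\gamma)}n!/\bigl(\nu(I_1)!\cdots\nu(I_{2^l})!\bigr)$.

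Finally I would pass from ordered tuples to the unordered sets that actually belong to $L(n,d)$. Since the $T_i$ in any element of $L(n,d)$ are distinct, a fixed set $\set{T_1,\dots,T_l}$ of type $\gamma$ is obtained from exactly $\numof{\Stab_{S_l}(\gamma)}=\prod_k m_k(\gamma)!$ of these ordered tuples, because a block of codimension $k$ can occupy only a position $i$ with $\gamma_i=k$. Dividing by this factor gives the first displayed formula for $\lambda_{n,d}(\gamma)$; the second follows by substituting $\nu(I_1)=n-s$ with $s=\nu(I_2)+\dots+\nu(I_{2^l})$ and using $n!/(n-s)!=s!\binom{n}{s}$. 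The steps needing genuine care are the verification of the displayed $D_d$ identity (a direct substitution into \eqref{eq:Dd-alternative} using $\numof{T_i}=d+1-\gamma_i$) and the claim that the symmetry factor is precisely $\prod_k m_k(\gamma)!$, which relies on the pairwise distinctness of the $T_i$ established above; the remaining bookkeeping — in particular that $\nu(I_1)=\nu(\emptyset)$ is unconstrained except through the total $n$ — is routine.
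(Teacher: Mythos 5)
Your proposal is correct and follows essentially the same route as the paper's own proof: count ordered tuples, classify them by the map $\nu$ recording how many symbols lie in exactly the blocks indexed by each $I\subset\set{1,\dots,l}$, count each class by a multinomial coefficient, translate condition 1) of Definition \ref{def:DefOfLattice} into condition 2) defining $N(n,d;\gamma)$ via the identity for $D_d$, and divide by $\numof{\Stab_{S_l}(\gamma)}=\prod_k m_k(\gamma)!$ using the pairwise distinctness of the blocks (your verification of the $D_d$ identity even makes explicit a step the paper passes over with ``by definition''). The only cosmetic point is that your initial class of ordered tuples should be declared as orderings of type-$\gamma$ elements of $L(n,d)$ (so the $T_i$ are pairwise distinct from the outset, exactly as intended for $\tilde L(n,d;\gamma)$); otherwise, when $\gamma$ has a repeated part, tuples with repeated blocks satisfy your stated conditions but correspond to no $\nu\in N(n,d;\gamma)$, and your distinctness observation already supplies the fact needed to make the correspondence exact in both directions.
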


Before giving a proof of this proposition, 
we give some explanation on 
the range of summation in \eqref{eq:number-type}.
We can consider $(\nu(I_1),\dots,\nu(I_{2^l}))$ as 
a $2^l$-dimensional vector of non-negative integers.  
The equalities and the inequalities
in 1),2),3) for $\nu$ specify a polytope.  
Hence $N(n,d;\gamma)$ can be identified 
with the  set of integer points  in a polytope in $\RR^{2^l}$.
Since the dimension $2^l$ of the vector increases exponentially with $l$, 
the number of terms in \eqref{eq:number-type} 
increases doubly exponentially in $l$.
In our computation for $d=6$ and $\gamma=(1,1,1,1,1,1)$,
$\numof N(n,6;(1,1,1,1,1,1))=109719496370$.  Computing a sum of 
this many polynomials is quite heavy.

The equalities and inequalities in 1),2) for $\nu$ 
concern only $\nu(I)$, $I\neq \emptyset$, 
and the bounds for these nonnegative integers 
are given in terms of $\gamma$ and $d$ only.
Therefore the range for $\nu(I), I\neq \emptyset$, 
in  $N(n,d;\gamma)$ does not depend on $n$. $n$ only appears through 
3): 
\[
\nu(\emptyset)=\nu(I_1)= n - (\nu(I_2)+\dots+\nu(I_{2^l})).
\]
Therefore in the right-hand side of  \eqref{eq:number-type} 
the sum is a finite sum not depending on $n$ and $n$ only appears in
the binomial coefficient $\binom{n}{\nu(I_2)+ \dots + \nu(I_{2^l})}$.

Now we give a proof of  Proposition \ref{prop:number-type}.
\begin{proof}[Proof of Proposition \ref{prop:number-type}]
Let us consider 
$l$-tuples $(T_1,\ldots,T_l)$ of subsets of $\Set{1,\ldots,n}$.
We define $\tilde L(n,d;\gamma)$ 
to be the set of  $l$-tuples $(T_1,\ldots,T_l)$ 
of subsets in $\Set{1,\ldots,n}$
satisfying the following:
\begin{enumerate}[label=\arabic{*}) ]
\item $\Set{T_1,\ldots,T_l} \in L(n,d)$.
\item $\codim_d{T_i} = \gamma_i$ for each $i$.
\end{enumerate}
Let $(T_1,\ldots,T_l)\in\tilde L(n,d;\gamma)$.
Then, 
since  $\Set{T_1,\ldots,T_l} \in L(n,d)$,
by \eqref{eq:D-for-2},
$\numof{(T_i\cup T_j)}> d $ for $i\neq j$. 
Since $\numof{T_i}$ and  $\numof{T_j}$ are less than or equal to $d+1$,
we obtain  $T_i\neq T_j$.
Therefore, 
for 
$(T_1,\ldots,T_l)\in\tilde L(n,d;\gamma)$ and 
$\sigma\in \Stab_{S_l}(\gamma)$,
we have 
\begin{align*}
(T_1,\ldots,T_l)\neq (T_{\sigma(1)},\ldots,T_{\sigma(l)}) 
\in\tilde L(n,d;\gamma)
\end{align*}
if $\sigma$ is not the identity.
This implies 
\begin{equation}
\label{eq:tildeL}
\numof{\tilde L(n,d;\gamma)}
=\lambda_{n,d}(\gamma)\cdot\numof{\Stab_{S_l}(\gamma)}
= \lambda_{n,d}(\gamma)\cdot\prod_{k} m_k(\gamma) ! .
\end{equation}

For 
$T=(T_1,\ldots,T_l)\in\tilde L(n,d;\gamma)$ 
and a subset $I \subset \Set{1,\ldots,l}$,
define $\tau(T,I)$ by
\begin{align*}
\tau(T,I)&=\Set{t| t\in T_i \iff i \in I}\\
&=\bigcap_{i\in I} T_i \setminus \bigcup_{i\not\in I} T_i .
\end{align*}
Moreover, for $\nu \in N(n,d;\gamma)$
let us define $\tilde L(n,d;\gamma,\nu)$ by
\begin{align*}
\tilde L(n,d;\gamma,\nu)=
\Set{ T\in  \tilde L(n,d;\gamma) | 
\forall I \subset \Set{1,\ldots,l}, 
\nu(I) = \numof{\tau(T,I)}
}.
\end{align*}
Then, by definition,
we have the following decomposition of $\tilde L(n,d;\gamma)$:
\begin{align*}
\tilde L(n,d;\gamma)=\coprod_{\nu\in N(n,d;\gamma)}\tilde L(n,d;\gamma,\nu).
\end{align*}
Now note that 
\begin{align*}
\numof{\tilde L(n,d;\gamma,\nu)}
=\frac{n!}{\nu(I_1)! \nu(I_2)! \cdots \nu(I_{2^l})!}.
\end{align*}
Therefore 
\[
\numof{\tilde L(n,d;\gamma)}=
\sum_{\nu\in N(n,d;\gamma)}
\frac{n!}{\nu(I_1)! \nu(I_2)! \cdots \nu(I_{2^l})!}.
\]
This together with \eqref{eq:tildeL}  proves the proposition.
\end{proof}

\subsection{Identities for the number of elements}
\label{subsec:identities}

We have coded the finite sum in \eqref{eq:number-type} in a computer program 
and evaluated $\lambda_{n,d}(\gamma)$ up to $d=6$.  
In the next subsection
we present our computational results.
However the range of summation in \eqref{eq:number-type} is 
somewhat complicated and our code was error-prone.
Therefore it is desirable to have some way of checking our results.  
Here we present
some identities among $\lambda_{n,d}(\gamma)$'s, 
which can be used for checking purposes.

Again we need some more definitions for stating the identities.
Let $T_i$ be a subset of $\set{1,\dots,n}$ of size $d=\numof{T_i}$.  
Then $H(\set{T_i})$ is a hyperplane of $\AAA_{n,d}$.
By abuse of terminology, 
we also call $\set{T_i}$ itself a hyperplane.
Let $\set{T_1}, \ldots, \set{T_m}$ be $m$ distinct hyperplanes.   
Consider the intersection  $H(\set{T_1})\cap \dots \cap H(\set{T_m})$
of corresponding hyperplanes of $\AAA_{n,d}$,
or equivalently the join  of these hyperplanes
$\set{T_1}, \dots, \set{T_m}$ in $L(n,d)$:
\[
\set{T_1}\vee \dots \vee \set{T_m} \in L(n,d).
\]
It seems hard to explicitly describe $S_1, \dots, S_{l'}$ such that  
$S=\{S_1, \dots,S_{l'}\}=\set{T_1}\vee \cdots \vee \set{T_m}$.  
However we can count 
the number of 
$\set{T_1, \dots, T_m}$, ($\numof T_i = d$, $\forall i$),
such that 
$\set{T_1}\vee \dots \vee \set{T_m}$ 
is an element of a  particular type of $L(n,d)$.  
This will give us the desired identities.

For a particular $T\in L(n,d)$ define
\begin{align*}
&\nt_{n,d}(m,T)\\
&=
\numof{ \Set{ \set{T_1, \dots, T_m} |  
  \begin{array}{c}
    \set{T_1}, \dots, \set{T_m} : \text{distinct hyperplanes}, \\
    T= \set{T_1}\vee \dots \vee \set{T_m}  
  \end{array}
}}, 
\end{align*}
which is the number of ways of choosing $m$ distinct hyperplanes 
such that their join is $T$.
By Theorem \ref{thm:structure of ideal is directprod}, 
$\nt_{n,d}(m,T)$ only depends on the type $\gamma_d(T)$ of $T$.  
Hence we can write
\[
\nt_{n,d}(m,T)=\nt_{n,d}(m,\gamma) \ \ \text{if}\ \ \gamma_d(T)=\gamma.
\]
Note that 
there are $\binom{\binom{n}{d}}{m}$ ways to choose $m$ distinct hyperplanes 
from $\set{1,\dots,n}$.
Therefore we have the following identity:
\begin{equation}
\label{eq:identity-sum}
\binom{\binom{n}{d}}{m}=\nt_{n,d}(m,\set{\emptyset}) + 
\sum_{i=0}^{d}
\sum_{\gamma \partitionof i} 
\lambda_{n,d}(\gamma) \cdot \nt_{n,d}(m,\gamma).
\end{equation}
If we can compute $\nt_{n,d}(m,\gamma)$,
these identities for various $m$ 
can be used to check computations of $\lambda_{n,d}(\gamma)$.
Hence it remains to show how to evaluate $\nt_{n,d}(m,\gamma)$, 
which is again based on recursion on $d$.

First we consider $\nt_{n,d}(m,\gamma)$ for some special $\gamma$.
Write  $(1^h)=(\underbrace{1,1,\dots,1}_{h})$.  Then
\[
\nt_{n,d}(m,(1^h))=\delta_{mh},
\]
where $\delta_{mh}$ is Kronecker's delta.
Also note that 
\[
\nt_{n,d}(m,T)=0 \ \ \text{ if } \ \ 
\rho_d(T)>m.
\]
In particular 
\[
\nt_{n,d}(m,\set{\emptyset})=0 \ \ \text{for}\ \  1\le m\le d.
\]
Based on these observations, 
there are two uses of \eqref{eq:identity-sum}.  
For $m=1,\dots,d$,  
we can use \eqref{eq:identity-sum} 
to check $\lambda_{n,d}(\gamma)$ for 
$\gamma\partitionof i \leq d$.
With $m>d$, 
\eqref{eq:identity-sum} gives the values of $\nt_{n,d}(m,\set{\emptyset})$.

Now we show 
how $\nt_{n,d}(m,\gamma)$ is evaluated 
from  $\nt_{n,d'}(m,\set{\emptyset})$ with $d' < d$. 
We list $\nt_{n,d}(m,\set{\emptyset})$ for $d=0,1,2$.
For $d=0$ 
we define $\nt_{n,0}(m,\set{\emptyset})=1$.
For $d=1$, 
since the intersection of more than one point is empty, 
$\nt_{n,1}(m,\set{\emptyset})=\binom{n}{m}$ for $m> 1$.
For $d=2$, the intersection of $m>2$ lines is non-empty 
if and only if they contain a common point $p_i$.  
Therefore for $m>2$, 
\[
\nt_{n,2}(m,\set{\emptyset})=\binom{\binom{n}{2}}{m} - n \binom{n-1}{m}.
\]

Finally as  another consequence of the main theorem 
we have the following proposition.
It allows us to evaluate $\kappa_{n,d}(m,\gamma)$ recursively 
from $\kappa_{n,d'}(m,\set{\emptyset})$,  $d' < d$.  

\begin{proposition}
\label{prop:identity}
For $T=\set{T_1,\dots,T_l}\in L(n,d)$, $T \neq \set{\emptyset}$, 
and a positive integer $m$,
define 
\begin{align*}
M(m,T) &= 
\Set{ (m_1,\dots, m_l) \in \ZZ_{>0}^l |
\begin{array}{c}
m_i \ge \codim_d (T_i), \forall i.\\
m_i=1 \text{ for } \codim_d (T_i)=1.\\
m=m_1+\dots+m_l.
\end{array}
}.
\end{align*}
Then 
\begin{align*}
\nt_{n,d}(m,T) =\sum_{(m_1,\dots,m_l)\in {M(m,T)}}\ \prod_{i=1}^l
\nt_{n-\numof{T_1},d-\numof{T_1}}(m_i,\set{\emptyset}). 
\end{align*}
\end{proposition}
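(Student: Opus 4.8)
The plan is to deduce the identity directly from the product decomposition of $\ideal_{n,d}(T)$ given in Theorem~\ref{thm:structure of ideal is directprod}. Write $d_i=d-\numof{T_i}$, so $d_i+1=\codim_d(T_i)$, and note that since $T\neq\set{\emptyset}$ one has $\numof{T_i}\ge 1$ for each $i$ (otherwise condition~\ref{item:DefOfLattice1} of Definition~\ref{def:DefOfLattice} would fail), hence $d_i<d$ and the formula is a genuine recursion on $d$. Let $\Phi\colon\ideal_{n,d}(T)\xrightarrow{\ \sim\ }\prod_{i=1}^{l}L(n-\numof{T_i},d_i)$ be the isomorphism from that theorem. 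Being an order isomorphism between lattices it preserves joins; being an order isomorphism of bounded graded posets it preserves the rank function $\rho$; and it sends the top element $T$ of $\ideal_{n,d}(T)$ to the top element $(\set{\emptyset},\dots,\set{\emptyset})$ of the product, where $\set{\emptyset}$ is the maximum of each factor by Remark~\ref{rem:max-element}.

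First I would reinterpret the count. In $\nt_{n,d}(m,T)$ every hyperplane $\set{T_j}$ that occurs lies below the join $T$, hence inside $\ideal_{n,d}(T)$, and is one of its rank-one elements; moreover the rank-one elements of any $L(n',d')$ are exactly its hyperplanes $\set{H}$ with $\numof{H}=d'$, since $\rho$ of such an element equals $1$. Because the rank on a finite product is the sum of the ranks, $\Phi$ must carry each hyperplane $\set{T_j}\in\ideal_{n,d}(T)$ to a tuple all of whose coordinates are the bottom element of the corresponding factor except for one coordinate $a(j)\in\set{1,\dots,l}$, whose entry is a hyperplane $h_j$ of $L(n-\numof{T_{a(j)}},d_{a(j)})$.

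Next I would push the join condition through $\Phi$. Since joins in a finite product of lattices are computed coordinatewise (with the empty join equal to the bottom), the equality $\set{T_1}\vee\dots\vee\set{T_m}=T$ is equivalent to: for every $i$, the join of $\set{h_j : a(j)=i}$ equals $\set{\emptyset}$ in $L(n-\numof{T_i},d_i)$; in particular each value $i$ is attained by at least one $j$. Sorting $\set{\set{T_1},\dots,\set{T_m}}$ by the value of $a$ therefore gives a bijection between the configurations counted by $\nt_{n,d}(m,T)$ and the data consisting of $(m_1,\dots,m_l)\in\ZZ_{>0}^l$ with $m_1+\dots+m_l=m$ together with, for each $i$, a set of $m_i$ distinct hyperplanes of $L(n-\numof{T_i},d_i)$ whose join is $\set{\emptyset}$. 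The number of the latter sets is $\nt_{n-\numof{T_i},d_i}(m_i,\set{\emptyset})$ when $d_i\ge 1$, by definition; when $d_i=0$ the factor is $\set{\emptyset,\set{\emptyset}}$ with the single hyperplane $\set{\emptyset}$, so this number is $1$ for $m_i=1$ and $0$ otherwise, which matches $\nt_{n-\numof{T_i},0}(m_i,\set{\emptyset})$ (under the convention $\nt_{\,\cdot\,,0}(\,\cdot\,,\set{\emptyset})=1$) precisely once the summation is restricted to $m_i=1$. Summing $\prod_i\nt_{n-\numof{T_i},d_i}(m_i,\set{\emptyset})$ over these data yields the claimed formula, after noting that any term with $1\le m_i<\codim_d(T_i)$ (possible only when $d_i\ge 1$) vanishes because $\nt_{n',d'}(m',\set{\emptyset})=0$ whenever $m'<\rho_{d'}(\set{\emptyset})=d'+1$; adjoining these zero terms turns the composition index set into exactly $M(m,T)$.

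The coordinatewise description of joins in a product and the rank-preservation of an order isomorphism of bounded graded posets are standard, and the bijection above is straightforward once hyperplanes have been identified with single-coordinate tuples. The only point requiring care is the case $d_i=0$, where the genuine count $\delta_{m_i,1}$ and the blanket convention $\nt_{\,\cdot\,,0}(\,\cdot\,,\set{\emptyset})=1$ disagree for $m_i\ge 2$ and are reconciled exactly by the clause ``$m_i=1$ for $\codim_d(T_i)=1$'' built into $M(m,T)$; I do not anticipate any obstacle beyond this bookkeeping.
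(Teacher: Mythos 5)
Your proof is correct and takes essentially the approach the paper intends: the paper states this proposition as ``another consequence of the main theorem'' and omits the details, and you supply exactly those details by transporting rank-one elements and joins through the isomorphism $\ideal_{n,d}(T)\cong\prod_i L(n-\numof{T_i},d-\numof{T_i})$, including the correct reconciliation of the $\codim_d(T_i)=1$ factors (where the genuine count is $\delta_{m_i,1}$) with the convention $\nt_{n,0}(m,\set{\emptyset})=1$ via the clause $m_i=1$ in $M(m,T)$. You also implicitly read the displayed formula with $\nt_{n-\numof{T_i},d-\numof{T_i}}$ in place of the paper's $\nt_{n-\numof{T_1},d-\numof{T_1}}$, which is the intended meaning.
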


Note that in the product a term with $d=\numof{T_i}$  does not 
contribute to the product since $\nt_{n,0}(m,\set{\emptyset})\equiv 1$. 
We omit a  detailed proof of the proposition.

\subsection{Number of elements and the characteristic polynomial up to dimension six}
\label{subsec:tables}

In this section we present our computational results for $4\le d\le 6$,
since the cases $d\le 3$ were already discussed in Section \ref{subsec:illustration}.
We just recall
\begin{align*}
\mu_{n,0}(\set{\emptyset})& =-1,  \qquad  \mu_{n,1}(\set{\emptyset})=n-1,\\
\mu_{n,2}(\set{\emptyset})&=    - 3\binom{n}{4}  -\binom{n}{2} + n-1 .
\end{align*}
From now on, to save space, we use the following abbreviated notation.  
\[
n_k = \binom{n}{k}.
\]
Then, for example,  
$\mu_{n,3}(\set\emptyset)$ is displayed  as
\[
\mu_{n,3}(\set\emptyset)= -1+n - n_2  + n_3  + 20 n_5
 + 170 n_6 + 630 n_7 +  840 n_8 + 280n_9.
\]

We now present the computational results for $d=4$.  
Because of   \eqref{eq:one-part}, \eqref{eq:two-part-1}, \eqref{eq:two-part-2}, 
we only show $\lambda_{n,d}((\gamma_1, \dots, \gamma_l))$ where $l\ge 3$.
Also, for further notational simplification,
we omit the subscripts and write e.g. $\lambda(1,1)$ instead of $\lambda_{n,4}((1,1))$.
\small
\begin{align*}
\lambda(1,1,1)& = 
15 n_{6}
+1470 n_{7}
+11340 n_{8}
+30240 n_{9}
+37450 n_{10}
+23100 n_{11}
+5775 n_{12}
, \\
\lambda(2,1,1) & = 
1260 n_{7}
+10080 n_{8}
+23940 n_{9}
+21000 n_{10}
+5775 n_{11}
, \\
\lambda(1,1,1,1) &= 
2100 n_{7}
+120855 n_{8}
+1640520 n_{9}
+9585450 n_{10}
+29799000 n_{11}
\\ & \quad
+54365850 n_{12}
+60660600 n_{13}
+41166125 n_{14}
+15765750 n_{15}
+2627625 n_{16}
.
\end{align*}

\begin{align*}
\chi(\AAA_{n,4},t)&=
t^4
-n_{4} t^3
+
\Big[
-n_{3}
+4 n_{4}
+45 n_{6}
+70 n_{7}
+35 n_{8}
\Big]t^2
\\ & \quad
+
\Big[
-n_{2}
+3 n_{3}
-6 n_{4}
-180 n_{6}
-1995 n_{7}
-11620 n_{8}
-30240 n_{9}
\\ & \qquad\qquad
-37450 n_{10}
-23100 n_{11}
-5775 n_{12}
\Big]t
\\ &\quad 
+
\Big[
-n
+2 n_{2}
-3 n_{3}
+4 n_{4}
+250 n_{6}
+8995 n_{7}
+184835 n_{8}
+1873620 n_{9}
\\ & \qquad\qquad 
+9963100 n_{10}
+30070425 n_{11}
+54435150 n_{12}
+60660600 n_{13}
\\ & \qquad\qquad 
+41166125 n_{14}
+15765750 n_{15}
+2627625 n_{16}
\Big] .
\end{align*}

\begin{align*}
\mu_{n,4}(\set{\emptyset})&=
-1
+n
-n_{2}
+n_{3}
-n_{4}
-115 n_{6}
-7070 n_{7}
-173250 n_{8}
-1843380 n_{9}
\\ & \quad 
-9925650 n_{10}
-30047325 n_{11}
-54429375 n_{12}
-60660600 n_{13}
\\ & \quad 
-41166125 n_{14}
-15765750 n_{15}
-2627625 n_{16} 
.
\end{align*}
\bigskip

\normalsize
The results for $d=5$ are as follows.
\footnotesize
\allowdisplaybreaks
\begin{align*}
\lambda(1,1,1)& = 
105 n_{7}
+9240 n_{8}
+102060 n_{9}
+453600 n_{10}
+1089550 n_{11}
+1561560 n_{12}
\\ & \qquad
+1336335 n_{13}
+630630 n_{14}
+126126 n_{15}
,\\
\lambda(2,1,1)&= 
105 n_{7}
+15960 n_{8}
+170100 n_{9}
+642600 n_{10}
+1166550 n_{11}
+1136520 n_{12}
\\ & \qquad
+585585 n_{13}
+126126 n_{14}
,\\
\lambda(1,1,1,1)& = 
42000 n_{8}
+2796255 n_{9}
+52475850 n_{10}
+464829750 n_{11}
+2391764760 n_{12}
\\ & \qquad
+7945667730 n_{13}
+18019621620 n_{14}
+28608004425 n_{15}
\\ & \qquad
+31876244400 n_{16}
+24459299865 n_{17}
+12318095790 n_{18}
\\ & \qquad
+3666482820 n_{19}
+488864376 n_{20}
,\\
\lambda(3,1,1)& = 
3360 n_{8}
+37800 n_{9}
+138600 n_{10}
+219450 n_{11}
+152460 n_{12}
+36036 n_{13}
,
\\
\lambda(2,2,1)& = 
5040 n_{8}
+56700 n_{9}
+201600 n_{10}
+300300 n_{11}
+194040 n_{12}
+45045 n_{13} 
,
\\
\lambda(2,1,1,1)& = 
47040 n_{8}
+3859380 n_{9}
+77275800 n_{10}
+682882200 n_{11}
+3311930160 n_{12}
\\ & \qquad
+9818128320 n_{13}
+18834816000 n_{14}
+23991267300 n_{15}
\\ & \qquad
+20272652400 n_{16}
+10985154180 n_{17}
+3473510040 n_{18}
+488864376 n_{19}
,\\
\lambda(1,1,1,1,1) &= 
70560 n_{8}
+28259280 n_{9}
+1892400300 n_{10}
+49372299900 n_{11}
\\ & \qquad
+678800152800 n_{12}
+5726202381900 n_{13}
+32397151296510 n_{14}
\\ & \qquad
+129991147035750 n_{15}
+383340007050000 n_{16}
+849257881311840 n_{17}
\\ & \qquad
+1429769976354720 n_{18}
+1833899747359680 n_{19}
+1780941069507600 n_{20}
\\ & \qquad
+1287845979720300 n_{21}
+672060801181770 n_{22}
+239171396233770 n_{23}
\\ & \qquad
+51946728593760 n_{24}
+5194672859376 n_{25}
.\end{align*}

\allowdisplaybreaks
\begin{align*}
\chi(\AAA_{n,5},t)&=
t^5 -n_{5} t^4
+
\Big[
-n_{4}
+5 n_{5}
+105 n_{7}
+280 n_{8}
+315 n_{9}
+126 n_{10}
\Big]t^3
\\ & \quad
+
\Big[
-n_{3}
+4 n_{4}
-10 n_{5}
-630 n_{7}
-11760 n_{8}
-105084 n_{9}
-454860 n_{10}
\\ & \qquad\qquad
-1089550 n_{11}
-1561560 n_{12}
-1336335 n_{13}
-630630 n_{14}
-126126 n_{15}
\Big]t^2
\\ & \quad
+
\Big[
-n_{2}
+3 n_{3}
-6 n_{4}
+10 n_{5}
+1540 n_{7}
+112371 n_{8}
+3739176 n_{9}
+57660120 n_{10}
\\ & \qquad\qquad
+479155600 n_{11}
+2413802160 n_{12}
+7965127170 n_{13}
+18028954944 n_{14}
\\ & \qquad\qquad
+28609896315 n_{15}
+31876244400 n_{16}
+24459299865 n_{17}
+12318095790 n_{18}
\\ & \qquad\qquad
+3666482820 n_{19}
+488864376 n_{20}
\Big]t
\\ & \quad
+
\Big[
-n
+2 n_{2}
-3 n_{3}
+4 n_{4}
-5 n_{5}
-1729 n_{7}
-444808 n_{8}
-51417954 n_{9}
\\ & \qquad\qquad
-2407629420 n_{10}
-54882065700 n_{11}
-712167312780 n_{12}
-5852028673491 n_{13}
\\ & \qquad\qquad
-32709595374456 n_{14}
-130517797815405 n_{15}
-383948623858800 n_{16}
\\ & \qquad\qquad
-849734640219465 n_{17}
-1430012864760480 n_{18}
-1833972588151704 n_{19}
\\ & \qquad\qquad
-1780950846795120 n_{20}
-1287845979720300 n_{21}
-672060801181770 n_{22}
\\ & \qquad\qquad
-239171396233770 n_{23}
-51946728593760 n_{24}
-5194672859376 n_{25}
\Big]
.
\end{align*}
\begin{align*}
\mu_{n,5}(\set{\emptyset})&=
-1
+n
-n_{2}
+n_{3}
-n_{4}
+n_{5}
+714 n_{7}
+343917 n_{8}
+47783547 n_{9}
\\ & \qquad
+2350424034 n_{10}
+54403999650 n_{11}
+709755072180 n_{12}
\\ & \qquad
+5844064882656 n_{13}
+32691567050142 n_{14}
+130489188045216 n_{15}
\\ & \qquad
+383916747614400 n_{16}
+849710180919600 n_{17}
+1430000546664690 n_{18}
\\ & \qquad
+1833968921668884 n_{19}
+1780950357930744 n_{20}
+1287845979720300 n_{21}
\\ & \qquad
+672060801181770 n_{22}
+239171396233770 n_{23}
+51946728593760 n_{24}
\\ & \qquad
+5194672859376 n_{25}
.
\end{align*}

\normalsize
\bigskip
Finally the results for $d=6$ are as follows.

\scriptsize
\begin{align*}
\lambda(1,1,1)& = 
420 n_{8}
+40600 n_{9}
+620550 n_{10}
+4158000 n_{11}
+16046800 n_{12}
+39399360 n_{13}
\\ & \qquad
+63588525 n_{14}
+67267200 n_{15}
+44900856 n_{16}
+17153136 n_{17}
+2858856 n_{18}
,\\
\lambda(2,1,1) &= 
840 n_{8}
+105210 n_{9}
+1486800 n_{10}
+8339100 n_{11}
+25225200 n_{12}
+46576530 n_{13}
\\ & \qquad
+54444390 n_{14}
+39414375 n_{15}
+16144128 n_{16}
+2858856 n_{17}
,\\
\lambda(1,1,1,1) &= 
105 n_{8}
+388080 n_{9}
+32389875 n_{10}
+847573650 n_{11}
+11095663425 n_{12}
\\ & \qquad
+88232164020 n_{13}
+470574214110 n_{14}
+1778211935500 n_{15}
+4911176169900 n_{16}
\\ & \qquad
+10078325056800 n_{17}
+15457185789045 n_{18}
+17651874149910 n_{19}
\\ & \qquad
+14793239711250 n_{20}
+8833453364736 n_{21}
+3557221631964 n_{22}
\\ & \qquad
+865778809896 n_{23}
+96197645544 n_{24}
,\\
\lambda(3,1,1)&= 
210 n_{8}
+45360 n_{9}
+642600 n_{10}
+3326400 n_{11}
+8523900 n_{12}
+12132120 n_{13}
\\ & \qquad
+9900891 n_{14}
+4414410 n_{15}
+840840 n_{16}
,\\
\lambda(2,2,1)& = 
280 n_{8}
+65520 n_{9}
+919800 n_{10}
+4596900 n_{11}
+11226600 n_{12}
+15315300 n_{13}
\\ & \qquad
+12192180 n_{14}
+5360355 n_{15}
+1009008 n_{16}
,\\ 
\lambda(2,1,1,1) &= 
892080 n_{9}
+83349000 n_{10}
+2170822500 n_{11}
+26591796000 n_{12}
\\ & \qquad
+189359450280 n_{13}
+876055780600 n_{14}
+2806801697700 n_{15}
\\ & \qquad
+6458643391200 n_{16}
+10866964308200 n_{17}
+13416110908200 n_{18}
\\ & \qquad
+12029730132420 n_{19}
+7626284265600 n_{20}
+3240681948504 n_{21}
\\ & \qquad
+828136252944 n_{22}
+96197645544 n_{23}
,\\ 
\lambda(1,1,1,1,1)&= 
1829520 n_{9}
+817016760 n_{10}
+72235270800 n_{11}
+2647690791900 n_{12}
\\ & \qquad
+53345363951880 n_{13}
+682682216596380 n_{14}
+6039039035429400 n_{15}
\\ & \qquad
+38946366176117400 n_{16}
+189638773413289200 n_{17}
+713826716560797840 n_{18}
\\ & \qquad
+2110340393930648880 n_{19}
+4950304696313776800 n_{20}
\\ & \qquad
+9265441477593100800 n_{21}
+13857900072549583050 n_{22}
\\ & \qquad
+16518003442667606880 n_{23}
+15574944975706176060 n_{24}
\\ & \qquad
+11462658924203487000 n_{25}
+6442333859931445476 n_{26}
\\ & \qquad
+2669265333214159680 n_{27}
+768162249080226000 n_{28}
\\ & \qquad
+137087416758932640 n_{29}
+11423951396577720 n_{30}
,\\
\lambda(4,1,1)& = 
7560 n_{9}
+113400 n_{10}
+589050 n_{11}
+1432200 n_{12}
+1747746 n_{13}
\\ & \qquad
+1009008 n_{14}
+210210 n_{15}
,\\
\lambda(3,2,1) &= 
30240 n_{9}
+453600 n_{10}
+2263800 n_{11}
+5128200 n_{12}
+5765760 n_{13}
\\ & \qquad
+3111108 n_{14}
+630630 n_{15}
,\\
\lambda(3,1,1,1)& = 
181440 n_{9}
+20594700 n_{10}
+585169200 n_{11}
+7466867100 n_{12}
+53238345160 n_{13}
\\ & \qquad
+236878922280 n_{14}
+700100200800 n_{15}
+1424183961200 n_{16}
\\ & \qquad
+2028644217600 n_{17}
+2026021217220 n_{18}
+1394893019520 n_{19}
\\ & \qquad
+633079366920 n_{20}
+171102531600 n_{21}
+20912531640 n_{22}
,\\
\lambda(2,2,2) &= 
7560 n_{9}
+113400 n_{10}
+554400 n_{11}
+1201200 n_{12}
+1261260 n_{13}
\\ & \qquad
+630630 n_{14}
+126126 n_{15}
,\\
\lambda(2,2,1,1) & = 
430920 n_{9}
+48365100 n_{10}
+1342768350 n_{11}
+16595271000 n_{12}
+114090786810 n_{13}
\\ & \qquad
+489169180500 n_{14}
+1396656261000 n_{15}
+2757820665600 n_{16}
\\ & \qquad
+3836191659300 n_{17}
+3764834613540 n_{18}
+2561038249770 n_{19}
\\ & \qquad
+1152905153400 n_{20}
+309695582196 n_{21}
+37642556952 n_{22}
,\\
\lambda(2,1,1,1,1) &= 
2872800 n_{9}
+1676581200 n_{10}
+164904790050 n_{11}
+6298213521600 n_{12}
\\ & \qquad
+126458717144760 n_{13}
+1557569654921280 n_{14}
+12897279885364875 n_{15}
\\ & \qquad
+76164200116804800 n_{16}
+333794628241774700 n_{17}
\\ & \qquad
+1115520582743887320 n_{18}
+2894974312598468100 n_{19}
\\ & \qquad
+5900420897320950000 n_{20}
+9496944246098058750 n_{21}
\\ & \qquad
+12073014477589665600 n_{22}
+12056810514853269165 n_{23}
\\ & \qquad
+9346203461860705440 n_{24}
+5507792588210012100 n_{25}
\\ & \qquad
+2383822869473725680 n_{26}
+714306478210645320 n_{27}
\\ & \qquad
+132360264456900480 n_{28}
+11423951396577720 n_{29}
,\\
\lambda(1,1,1,1,1,1) &= 
4011840 n_{9}
+11413776150 n_{10}
+3444031510920 n_{11}
+341035483477150 n_{12}
\\ & \qquad
+16334107213023600 n_{13}
+458689729433265330 n_{14}
\\ & \qquad
+8450977741650944500 n_{15}
+109792467460902806580 n_{16}
\\ & \qquad
+1056347419381332078000 n_{17}
+7792389750829016643310 n_{18}
\\ & \qquad
+45197004798213378970860 n_{19}
+209996223288982641611100 n_{20}
\\ & \qquad
+792475775069757320141600 n_{21}
+2453913578583257706865950 n_{22}
\\ & \qquad
+6280395970196377852122300 n_{23}
+13348940867374682005436000 n_{24}
\\ & \qquad
+23623379361553534532970000 n_{25}
+34820458479536167782093750 n_{26}
\\ & \qquad
+42668658867461724953856000 n_{27}
+43277385167426660997596850 n_{28}
\\ & \qquad
+36064655494545316433394600 n_{29}
+24416711436628708549852500 n_{30}
\\ & \qquad
+13209334741333731036156120 n_{31}
+5572094138384063443144992 n_{32}
\\ & \qquad
+1765284170332337557943040 n_{33}
+394963790210911659497760 n_{34}
\\ & \qquad
+55628702846607275985600 n_{35}
+3708580189773818399040 n_{36}
.\end{align*}

\begin{align*}
\chi(\AAA_{n,6},t)&=
t^6 -n_{6} t^5
+ \Big[
-n_{5}
+6 n_{6}
+210 n_{8}
+840 n_{9}
+1575 n_{10}
+1386 n_{11}
+462 n_{12}
\Big]t^4
\\
& \quad
+ 
\Big[
-n_{4}
+5 n_{5}
-15 n_{6}
-1750 n_{8}
-49420 n_{9}
-638190 n_{10}
-4174170 n_{11}
\\ & \qquad 
-16052344 n_{12}
-39399360 n_{13}
-63588525 n_{14}
-67267200 n_{15}
-44900856 n_{16}
\\ & \qquad 
-17153136 n_{17}
-2858856 n_{18}
\Big]t^3
\\ & \quad 
+
\Big[
-n_{3}
+4 n_{4}
-10 n_{5}
+20 n_{6}
+6545 n_{8}
+808269 n_{9}
+40056051 n_{10}
+907650744 n_{11}
\\ & \qquad 
+11350086517 n_{12}
+88888289490 n_{13}
+471662471280 n_{14}
+1779383330725 n_{15}
\\ & \qquad 
+4911968241180 n_{16}
+10078630954392 n_{17}
+15457237248453 n_{18}
\\ & \qquad 
+17651874149910 n_{19}
+14793239711250 n_{20}
+8833453364736 n_{21}
\\ & \qquad 
+3557221631964 n_{22}
+865778809896 n_{23}
+96197645544 n_{24}
\Big]t^2
\\ & \quad
+ \Big[
-n_{2}
+3 n_{3}
-6 n_{4}
+10 n_{5}
-15 n_{6}
-12992 n_{8}
-6922440 n_{9}
-1253314020 n_{10}
\\ & \qquad 
-85462425510 n_{11}
-2844981329190 n_{12}
-55072066969920 n_{13}
\\ & \qquad 
-692509356770231 n_{14}
-6077776702187190 n_{15}
-39056313885629040 n_{16}
\\ & \qquad 
-189868515062882656 n_{17}
-714183552466036653 n_{18}
-2110751767192248180 n_{19}
\\ & \qquad 
-4950652071117753000 n_{20}
-9265650239791905960 n_{21}
-13857984617732497242 n_{22}
\\ & \qquad 
-16518024125161398840 n_{23}
-15574947284449669116 n_{24}
-11462658924203487000 n_{25}
\\ & \qquad 
-6442333859931445476 n_{26}
-2669265333214159680 n_{27}
-768162249080226000 n_{28}
\\ & \qquad 
-137087416758932640 n_{29}
-11423951396577720 n_{30}
\Big]t
\\ & \quad
+
\Big[
-n
+2 n_{2}
-3 n_{3}
+4 n_{4}
-5 n_{5}
+6 n_{6}
+13020 n_{8}
+30306276 n_{9}
+21482580105 n_{10}
\\ & \qquad 
+4476460758924 n_{11}
+385724720114965 n_{12}
+17372731634141884 n_{13}
\\ & \qquad 
+473573588684378182 n_{14}
+8594435149519438219 n_{15}
+110777945174652868112 n_{16}
\\ & \qquad 
+1061369389494699244960 n_{17}
+7811915116061435525146 n_{18}
\\ & \qquad 
+45256066666226567391240 n_{19}
+210137040306764298507780 n_{20}
\\ & \qquad 
+792742452459639413305574 n_{21}
+2454315914651904858849294 n_{22}
\\ & \qquad 
+6280878741810399702094119 n_{23}
+13349398508879560267264124 n_{24}
\\ & \qquad 
+23623717675088602759587900 n_{25}
+34820649359673839255319726 n_{26}
\\ & \qquad 
+42668738231115243168001080 n_{27}
+43277408079933868947476370 n_{28}
\\ & \qquad 
+36064659595743867804796080 n_{29}
+24416711779347250447184100 n_{30}
\\ & \qquad 
+13209334741333731036156120 n_{31}
+5572094138384063443144992 n_{32}
\\ & \qquad 
+1765284170332337557943040 n_{33}
+394963790210911659497760 n_{34}
\\ & \qquad 
+55628702846607275985600 n_{35}
+3708580189773818399040 n_{36}
\Big]
.
\end{align*}

\begin{align*}
\mu_{n,6}(\set{\emptyset})&=
-1
+n
-n_{2}
+n_{3}
-n_{4}
+n_{5}
-n_{6}
-5033 n_{8}
-24143525 n_{9}
-20268685521 n_{10}
\\ & \qquad
-4391901811374 n_{11}
-382891072820410 n_{12}
-17317748416062094 n_{13}
\\ & \qquad
-472881550926490706 n_{14}
-8588359152133314554 n_{15}
\\ & \qquad
-110738893772690579396 n_{16}
-1061179531058250163560 n_{17}
\\ & \qquad
-7811200947966203878090 n_{18}
-45253955932111249292970 n_{19}
\\ & \qquad
-210132089669486420466030 n_{20}
-792733186818233074764350 n_{21}
\\ & \qquad
-2454302056670844347984016 n_{22}
-6280862223787140319505175 n_{23}
\\ & \qquad
-13349382933932372015240552 n_{24}
-23623706212429678556100900 n_{25}
\\ & \qquad
-34820642917339979323874250 n_{26}
-42668735561849909953841400 n_{27}
\\ & \qquad
-43277407311771619867250370 n_{28}
-36064659458656451045863440 n_{29}
\\ & \qquad
-24416711767923299050606380 n_{30}
-13209334741333731036156120 n_{31}
\\ & \qquad
-5572094138384063443144992 n_{32}
-1765284170332337557943040 n_{33}
\\ & \qquad
-394963790210911659497760 n_{34}
-55628702846607275985600 n_{35}
\\ & \qquad
-3708580189773818399040 n_{36}
.
\end{align*}

\normalsize
\bibliographystyle{plainnat}
\bibliography{x2}
\end{document}